\def\leaderfill{\leaders\hbox to .25em{\hss.\hss}\hfill}
\newcommand{\red}[1]{\textcolor{black}{#1}}
\newcommand{\blue}[1]{\textcolor{blue}{#1}}
\newtheorem{theorem}{\bf Theorem}[section]
\newtheorem{lemma}{\bf Lemma}[section]
\newtheorem{corollary}{\bf Corollary}[section]
\newtheorem{remark}[theorem]{Remark}
\def\rd{\text{d}}
\def\re{\text{e}}
\def\ri{\text{i}}
\def\eps{{\varepsilon}}
\def\qand{\quad\mbox{and}\quad}
\def\fr{\mbox{$\frac{1}{2}$}}
\def\R{\mathbb{R}}
\def\N{\mathbb{N}}
\def\Dt{\partial_T}
\def\Chi{G}
\def\({\left(}
\def\){\right)}
\def\bk{{\bf k}}
\def\bv{{\bf v}}
\def\bw{{\bm\omega}}
\def\bq{{\bf q}}
\def\bO{{\bm\Omega}}
\def\bu{{\bf u}}
\def\bv{{\bf v}}
\def\bw{{\bm\omega}}
\def\eb{{\nu}}
\begin{document}

\begin{center} 
\textbf{\Large A proof of validity for multiphase}\\[2mm] 

\textbf{\Large Whitham modulation theory\footnote{\blue{Revised version, 22 September 2020}}}

\vspace{.75cm} 

\textsf{\large Thomas J. Bridges$^1$, Anna Kostianko$^{1,2}$ {\small and} Guido Schneider$^3$} 
\vspace{.25cm} 

\textit{1. Department of Mathematics, University of Surrey, Guildford GU2 7XH, UK} 

\textit{2. School of Mathematics and Statistics, Lanzhou University, Lanzhou 730000 P.R. China} 

\textit{3. Institut f\"ur Analysis, Dynamik und Modellierung, 
Universit\"at Stuttgart, Pfaffenwaldring 57,
70569 Stuttgart, Germany} 

%\textsf{\small\today}
\end{center}

\begin{abstract}
\noindent It is proved
that approximations which are obtained as solutions of the multiphase Whitham
modulation equations stay close
to solutions of the original equation on a natural time scale. The class
of nonlinear wave equations chosen for the starting point is
coupled nonlinear Schr\"odinger equations.  These equations are
not in general integrable, but they
have an explicit family of multiphase wavetrains that generate
multiphase Whitham equations which may be elliptic,
hyperbolic, or of mixed type. Due to the change of type, the function space
setup is based on Gevrey spaces with initial data analytic in a strip in the complex
plane.  In these spaces a Cauchy-Kowalevskaya-like existence and uniqueness theorem is proved.  Building on this theorem and higher-order approximations
to Whitham theory, a rigorous comparison of solutions, of the
coupled nonlinear Schr\"odinger equations and
the multiphase Whitham modulation equations, is obtained.
\end{abstract}

\section{Introduction}
\setcounter{equation}{0}
\label{sec-intro}

Given a periodic travelling wave of a conservative nonlinear wave equation, generated by
a Lagrangian, Whitham modulation theory, in its simplest one-phase form,
is a perturbation theory
where the wavenumber $k$ and frequency $\omega$ of the travelling wave are
perturbed and allowed to vary slowly in time and space,
thereby capturing modulation of the basic wave.

The theory reduces the original
nonlinear wave equation to a pair of first order quasilinear PDEs
\begin{equation}\label{wmes-1}
\partial_T q= \partial_X \Omega \qand \partial_T \mathscr{A}(\omega+\Omega,k+q)+ \partial_X \mathscr{B}(\omega+\Omega,k+q) = 0 \,,
\end{equation}
where $\Omega(X,T,\eps)$
is the slowly varying frequency and $q(X,T,\eps)$ is the slowly varying wavenumber.
The independent variables in (\ref{wmes-1}) are slow time and space coordinates,
$T=\eps t$ and $X=\eps x$, with $0< \eps\ll1$.
The function $\mathscr{A}$ is the wave action and
$\mathscr{B}$ is the wave action flux, and they are
determined from a given Lagrangian, and satisfy
$\partial_k \mathscr{A}=\partial_\omega \mathscr{B}$.

Analysis of the Whitham modulation equations (WMEs) in (\ref{wmes-1})
then leads to deductions
about the effect of perturbations on the original periodic travelling wave. For
example when the pair (\ref{wmes-1}) is elliptic (hyperbolic) the original
periodic travelling wave is unstable (stable) to long wave perturbations,
with appropriate hypotheses
\red{(cf.\ \textsc{Benzoni-Gavage, et al.}~\cite{bgnr})}.
There is
now a vast literature on the reduction process, asymptotics,
and analysis of the WMEs for a wide range of nonlinear wave equations,
generated by a Lagrangian, in the case
where the basic state is a single-phase travelling wave (e.g.\
\textsc{Whitham}~\cite{w74}, \textsc{Kamchatnov}~\cite{k00},
\textsc{Biondini et al.}~\cite{behm16}, \textsc{Bridges}~\cite{tjb17}
and references therein).

An obvious question is how accurate the solutions of
(\ref{wmes-1}) are when compared to solutions of the original equation.
\red{From a physical point of view, the interest in validity is that an
  approximation should represent the solutions of the original equation
  as closely as possible.  There are many examples where approximate
  equations are \emph{not valid} (a history with examples is in
  \textsc{Sunny}~\cite{sunny}), and so there is no reason to assume
  {\it a priori} that even asymptotically valid approximations are rigorously
  valid.} A rigorous comparison between approximate and exact solutions
requires introduction of a metric and an existence theory
in a function space large enough to accommodate the range of solutions expected
of nonlinear wave equations on the real line.

A proof of the validity of the WMEs (\ref{wmes-1})
has been given
by \textsc{D\"ull \& Schneider}~\cite{ds09} when the original equation is
the cubic nonlinear Schr\"odinger (NLS) equation
\begin{equation}\label{nls-1}
\ri \partial_t \Psi + \partial_{x}^2
\Psi+ \gamma |\Psi|^2\Psi = 0 \,,
\end{equation}
where $\Psi(x,t)$ is complex valued, $\gamma=\pm1$, $x\in\R$ and $t\geq 0$.
First, $\Psi$ is expressed in the form
\begin{equation}\label{Psi-r-phi}
\Psi(x,t) = {\rm exp}\big(r(x,t)+\ri\phi(x,t)\big)\,,
\end{equation}
where $r$ and $\phi$ are real-valued,
and then a pair of equations for $r$ and $v:=\partial_x \phi$ is derived. The exact
equations for $r$ and $v$ are recast in terms of the same independent variables,
$X=\eps x$ and $T=\eps t$, as in the reduced equations,
\begin{equation}\label{ru-check-def}
r(x,t) = \check{r}(X,T,\eps) \qand v(x,t) = \check{v}(X,T,\eps)\,.
\end{equation}
The strategy is then to compare the solutions $\check{r}(X,T,\eps)$ and
$\check{v}(X,T,\eps)$ of the exact equations to solutions of the
WMEs for $\varepsilon>0$ sufficiently small.

The WMEs for (\ref{nls-1})
are deduced by modulating the basic travelling wave solution,
\begin{equation}\label{nls-1-tw}
\Psi_0(x,t) = \re^{r_0+\ri(kx+\omega t+\theta_0)}\,,\quad \mbox{with}\quad
\omega + k^2 - \gamma \re^{2r_0}=0\,.
\end{equation}
The modulation mapping
\begin{equation}\label{modulation-mapping}
r_0\mapsto r_0 + r^*(X,T) \qand k \mapsto k + v^*(X,T)\,,
\end{equation}
leads to a form of the WMEs (\ref{wmes-1}) in terms of
$r^*(X,T)$ and $v^*(X,T)$.

The rigorous approximation result for (\ref{nls-1})
is as follows. Given a solution of the
NLS equation in coordinates
(\ref{ru-check-def}) and a solution of the
WMEs in coordinates (\ref{modulation-mapping}), with
initial data satisfying
\begin{equation}\label{ru-initialdata}
\left\|(\check{r}(X,T,\eps),\check{v}(X,T,\eps)\big)\big|_{T=0} -
\big(r^*(X,T),v^*(X,T)\big)\big|_{T=0}\right\| = \mathcal{O}(\varepsilon)\,,
\end{equation}
in a suitably chosen norm $ \| \cdot \| $,
the main validity result in \cite{ds09} is
\begin{equation}\label{sup-sup-identity}
\sup_{T\in[0,T_1]}\sup_{X\in\R}\big|
\big(\check{r}(X,T,\eps),\check{v}(X,T,\eps)\big)-
\big(r^*(X,T),v^*(X,T)\big)\big| \leq C_2\eps
\quad\mbox{for all}\ \eps\in(0,\eps_0)\,,
\end{equation}
where $\eps_0$, $C_2$ and $T_1$ are all positive constants. From this
estimate we can conclude that the reduction of the NLS equation
(\ref{nls-1}), in the neighbourhood of the family of periodic
travelling waves (\ref{nls-1-tw}), to the WMEs,
is valid on the natural time scale, $t=\mathcal{O}(\eps^{-1})$.

\red{For the single NLS equation, there
  is a close connection between reduction to the WMES and the semiclassical
  approximation of NLS, and this problem has been extensively studied,
  particularly in the case of defocussing NLS (e.g.\ \cite{jlm99,k00} and references therein).  A related problem is the dispersionless limit of the KdV equation \cite{k00,Ego,Lax,bhj}.  The
limit in both cases can be very delicate and oscillatory. The most detailed results on the limit have been
obtained using integrability of the NLS equation and KdV equations.
}

In moving from the single NLS to coupled NLS, and moving from single-phase
WMEs to multiphase WMEs, new challenges arise.  Integrability is rare
and so methods that do not require integrability are needed.
\red{An additional challenge is the fact
that the multi-phase WMEs do not, in general, preserve the
characteristic type (elliptic, hyperbolic, or mixed) under evolution.
In addition, even if the hyperbolicty is preserved on some specific
   trajectory of multi-phase WMEs, the linear stability of the background wave-train may be lost
   during the evolution. For these reasons, a validity result in Sobolev spaces for multi-phase WMEs
   is problematic no matter how regular these Sobolev spaces are.  On the
   other hand, by taking initial data that is analytic in a strip about
   the real axis, Gevrey spaces capture all
   trajectories regardless of whether the characteristics are elliptic,
   hyperbolic, or mixed.}

A proof of validity
requires three steps (e.g.\ \cite{ds09,su17}): (i) a local existence and uniqueness theory
for the WMEs (\ref{wmes-1}), (ii) a local existence and
uniqueness theory for solutions of the original equation (\ref{nls-1}),
and (iii) an approximation theory for the difference between the two
solutions. The backbone of all three steps is the choice of
function space, and the choice of coordinates, with the latter
chosen to facilitate the analysis.
The function space has to be large enough to include all bounded solutions
on the real line, and it has to account for the fact that the WMEs
may be elliptic and so not well posed. Inspired by \cite{ds09},
the function space to be used here is a scale of Gevrey spaces.

A two-phase wavetrain is
a solution of a nonlinear wave equation of the form
\[
{\bf u}(x,t) = \widehat{\bf u}(\theta_1,\theta_2)\,,\quad
\theta_j=k_jx+\omega_jt+\theta_j^0\,,\ j=1,2,
\]
with
\[
\widehat{\bf u}(\theta_1+2\pi,\theta_2)=\widehat{\bf u}(\theta_1,\theta_2)
\qand \widehat{\bf u}(\theta_1,\theta_2+ 2\pi)=\widehat{\bf u}(\theta_1,\theta_2)\,.
\]
The wavenumbers ${\bf k}=(k_1,k_2)$ and
frequencies ${\bm\omega}=(\omega_1,\omega_2)$ are in general distinct
and $\theta_1^0$ and $\theta_2^0$ are constant phase shifts.
A multiphase wavetrain is the generalization of this form
to $N$-phases with $N$ finite.

The modulation of multiphase wavetrains, from the perspective of Whitham theory
starting from a Lagrangian, was first studied by
\textsc{Ablowitz \& Benney}~\cite{ab70}.
They derived the conservation of wave action for scalar fields with two
phases in detail, and showed how the theory generalized to $N$ phases.
Examples in \textsc{Ablowitz}~\cite{a72} show that in
general one should expect small divisors, but weakly nonlinear solutions
could still be obtained. However for integrable systems, multiphase
averaging and the WMEs are robust and rigorous, without small
divisors, and a general
theory can be obtained. There is now a vast literature on
multiphase WMEs for integrable systems (see \textsc{Flaschka et al.}~\cite{ffm80} and its citation trail).
On the other hand if the system is not integrable,
but there is an $N$-fold
toral symmetry,
then again a theory for conservation of wave action
and multiphase WMEs can
be developed without small divisors and smoothly varying $N$-phase
wavetrains (see \textsc{Ratliff}~\cite{r17}).
 The action of an $N-$fold toral
   symmetry, $\mathbb{T}^N$,
   in the context of coupled NLS equations, can be represented as
  \[
  (\psi_1,\ldots,\psi_N) \mapsto ( e^{i \theta_1}\psi_1,\ldots e^{i \theta_N}\psi_N)
  \,,\quad (\theta_1,\ldots,\theta_N)\in S^1\times\cdots\times S^1:=\mathbb{T}^N\,,
  \]
  with the $N=2$ case needed for the coupled NLS equation introduced
  below.
 In essence \red{the basic wavetrain is aligned with the toral
 symmetry}, and the conservation of wave action is replaced by the conservation
law generated by the symmetry. It is this latter class of multiphase
WMEs whose validity is of interest here.

In approaching the validity problem for multiphase WMEs,
a general theory starting from an abstract Lagrangian is at present intractable.
Therefore, we restrict attention to
the case of modulation of two-phase wavetrains of
coupled nonlinear Schr\"odinger (CNLS) equations
where a rigorous and complete reduction theory can be obtained.

CNLS equations arise in a wide range of applications
(e.g.\  models for Bose-Einstein condensates \cite{sb09},
the theory of water waves and rogue waves \cite{roskes,dls19}, nonlinear optics
\cite{kfc15}). For definiteness, we take the following form for the
CNLS equations as a starting point
\begin{equation}\label{cnls}
\begin{array}{rcl}
\displaystyle \ri {\partial_t \Psi_1}+
{\partial^2_x \Psi_1} +
\gamma_1|\Psi_1|^2\Psi_1 + \alpha |\Psi_2|^2\Psi_1 &=& 0,  \\[2mm]
\displaystyle \ri{\partial_t \Psi_2}+
{\partial_x^2 \Psi_2} + \alpha
|\Psi_1|^2\Psi_2 + \gamma_2|\Psi_2|^2\Psi_2 &=& 0\,.
\end{array}
\end{equation}
These equations are known to be integrable for only very special
values of the coefficients \cite{dls18}.  Here integrability is not
assumed and the coefficients are free to take any values with
$\alpha\in\R\setminus\{0\}$ and $\gamma_j \in  \{-1, 1\}$ and the
non-degeneracy constraint
\begin{equation}\label{gamma-alpha-condition}
  \gamma_1\gamma_2 - \alpha^2 \neq 0\,.
\end{equation}
The CNLS equation (\ref{cnls})
has an explicit four-parameter family of two-phase wavetrains.
Modulation of
this family of two-phase wavetrains generates a system of multiphase WMEs.

To prove validity the pair of equations (\ref{cnls})
is first transformed to the form
\begin{equation}\label{u-M-form}
  {\bf u}_T = {\bf M}({\bf u})\bu_X + \eps^2 {\bf F}(D_X^3\bu)\,,
\end{equation}
  where ${\bf u}$ has four components (based on the real and imaginary parts
  of $\Psi_1$ and $\Psi_2$),
  ${\bf M}(\bu)$ is entire, ${\bf F}$ is a polynomial in
  $D_X^k\bu=(\bu,\partial_X\bu,\ldots,\partial_X^k\bu)$.  The independent
  variables $x,t$ are scaled as $T=\eps t$ and $X=\eps X$.

  The principal advantage of the form (\ref{u-M-form}) is that when $\eps=0$
  it reduces to the multiphase WMEs.  Hence if
  ${\bf u}^*$ is a solution of the WMEs
  \begin{equation}\label{M-wmes}
    \partial_T \bu^* = {\bf M}(\bu^*)\partial_X\bu^*\,,
  \end{equation}
  then the validity proof is obtained by studying the difference
  $\|\bu(X,T,\eps) - \bu^*(X,T)\|$
  as a function of time in an appropriate function space, which in
  this case is a Gevrey space.

After choosing appropriate coordinates
and introducing the properties of Gevrey spaces, the steps of the
validity theory are set in motion.
Firstly existence is proved for the WMEs in
\S\ref{sec-CK-gevrey}\ref{subsec-MPWMEs-existence}, using an abstract
Cauchy-Kowalevskaya-type theorem developed in \S\ref{sec-CK-gevrey}.
Secondly, improved approximations (higher-order Whitham theory) are
obtained in \S\ref{sec-approx}.  The generated perturbation series is
not necessarily convergent, and so the exact solution (perturbation series
plus remainder) is studied in \S\ref{sec-exactsolutions}.
In \S\ref{sec-exactsolutions} an abstract theory is developed for
general systems of the form (\ref{M-wmes}), and then a summary
for the special case of CNLS is given in \S\ref{sec-exactsolutions}\ref{sec-summary},
thereby completing the proof of validity.
In the concluding remarks section implications and generalizations
are discussed.  \red{In the appendix some supplementary results on
Gevrey spaces are proved.}
\medskip

\noindent{\bf Remarks.}
(a) For notational simplicity we have restricted to \eqref{cnls},
although it is not the most general form of the CNLS equations.
By rescaling $ x,t,\Psi_1$ and $ \Psi_2 $ in general a normal form of the CNLS equations with
one additional parameter in front of one of the $ x $-derivative terms is obtained. However, for our purposes the above form (\ref{cnls}) is not a restriction.
(b) Throughout the paper, many different constants are denoted with
the same symbol $ C $ if they can be
chosen independently of the small, $0 < \varepsilon \ll 1$, perturbation parameter.

\section{Formal derivation of multiphase WMEs}
\setcounter{equation}{0}
\label{sec-formalderivation-MP-WMEs}

The basic two-phase wavetrain of CNLS is
\begin{equation}\label{twophase-wavetrains}
\Psi_j(x,t) = \Psi_j({\bm\theta},\bw,\bk) := \psi_j(\bw,\bk)\re^{\ri\theta_j(x,t)}\,,\quad
\theta_j(x,t) = k_jx+\omega_jt+\theta_j^0\,,\quad j=1,2\,,
\end{equation}
with ${\bm\theta}=(\theta_1,\theta_2)$,
$\bw=(\omega_1,\omega_2)$ and $\bk=(k_1,k_2)$, and the
amplitudes ${\bm\psi}=(\psi_1,\psi_2)$ are real valued.
Substitution
into the governing equations (\ref{cnls}) generates
a relationship between the amplitudes and the frequencies and
wavenumbers,
\begin{equation}\label{basic-state}
\begin{array}{rcl}
\psi_1(\bw,\bk)^2 &=&\displaystyle
\frac{1}{\beta}\bigg(\gamma_2(\omega_1+ k_1^2)-\alpha(\omega_2+ k_2^2)\bigg)\\[4mm]
\psi_2(\bw,\bk)^2 &=&\displaystyle \frac{1}{\beta}\bigg(\gamma_1(\omega_2+ k_2^2)-\alpha(\omega_1+ k_1^2)\bigg)\,,
\end{array}
\end{equation}
with $\beta=\gamma_1 \gamma_2- \alpha^2$, which is non-zero
due to (\ref{gamma-alpha-condition}).

The traditional approach to deriving the WMEs is to use an
averaged Lagrangian (e.g.\ Chapter 14 of \textsc{Whitham}~\cite{w74}).
The CNLS equations (\ref{cnls}) are formally the Euler-Lagrange equation for
\begin{equation}\label{vp-1}
\delta \int_{t_1}^{t_2}\int_{x_1}^{x_2} L(\Psi_t,
\Psi_x,\Psi)\,\rd x \rd t = 0\,,
\end{equation}
with $\Psi:=(\Psi_1,\Psi_2)$, fixed endpoint variations on $\delta\Psi$, and
\begin{equation}\label{Lagr-cnls}
\begin{array}{rcl}
L &=& \displaystyle\frac{\ri}{2}\left(\overline\Psi_1(\Psi_1)_t -
\Psi_1(\overline\Psi_1)_t \right) +
\frac{\ri}{2}\left(\overline\Psi_2(\Psi_2)_t -
\Psi_2(\overline\Psi_2)_t \right) \\[4mm]
&&\quad -\big|(\Psi_1)_x\big|^2
-\big|(\Psi_2)_x\big|^2 +\fr\gamma_1|\Psi_1|^4 +
\alpha |\Psi_1|^2|\Psi_2|^2 + \fr\gamma_2|\Psi_2|^4\,,
\end{array}
\end{equation}
with the overline indicating complex conjugate. The basic state
(\ref{twophase-wavetrains}) is
substituted into (\ref{Lagr-cnls}) and $L$ is averaged over the two phases
reducing it to $\mathscr{L}^{avg}(\bw,\bk)$.
This averaged Lagrangian is then assumed to depend slowly on
$X=\eps x$ and $T=\eps t$, and a secondary variational principle is
introduced
\begin{equation}\label{vp-2}
\delta \int_{T_1}^{T_2}\int_{X_1}^{X_2} \mathscr{L}^{avg}(\bw+{\bm\phi}_T,\bk+{\bm\phi}_X)\,\rd X\rd T = 0\,,
\end{equation}
obtained by replacing $\bw\mapsto \bw+{\bm\phi}_T$ and $\bk\mapsto \bk+{\bm\phi}_X$.
Taking variations with respect to ${\bm\phi}$, with
fixed endpoints, generates the vector-valued conservation of wave action
\[
\frac{\partial\ }{\partial T}\left(
\frac{\partial\mathscr{L}^{avg}}{\partial\bw}\right) +
\frac{\partial\ }{\partial X}\left(
\frac{\partial\mathscr{L}^{avg}}{\partial\bk}\right) =0\,.
\]
\red{Defining $\bq={\bm\phi}_X$ and $\bO={\bm\phi}_T$}, and adding
in the integrability condition $\partial_T\bq=\partial_X\bO$
gives the two-phase WMEs. This is the classic derivation that can be
found in \cite{ab70} and \S14.9 of \cite{w74}.

A direct approach, which is more amenable to rigorous analysis,
is to start with the exact geometric optics ansatz,
\begin{equation}\label{goptic-approx}
{\bm\Psi}(x,t) = {\bm\Psi}({\bm\theta}+\eps^{-1}{\bm\phi},\bw+\bO,\bk+\bq)+
\eps {\bf W}({\bm\theta}+\eps^{-1}{\bm\phi},X,T,\eps)\,,
\end{equation}
with
\[
  \Psi_j({\bm\theta}+\eps^{-1}{\bm\phi},\bw+\bO,\bk+\bq):=
  \re^{\ri(\theta_j+\eps^{-1}\phi_j)}  \psi_j(\bw+\bO,\bk+\bq)\,,\quad j=1,2\,,
  \]
  where the function
  ${\bm\phi}$, $\bO$, and $\bq$ are slowly
varying functions of $X=\eps x$ and $T=\eps t$, and ${\bf W}$ is a remainder term, with the constraint $\partial_T\bq=\partial_X\bO$ imposed.

The expression (\ref{goptic-approx}) is substituted into the governing
equations (\ref{cnls}) and expanded order by order in $\eps$.
Invoking a solvability condition at second order then generates
the WMEs directly
\begin{equation}\label{mp-wmes}
 \partial_T \mathscr{A}_j(\bw+\bO,\bk+\bq) + \partial_X\mathscr{B}_j(\bw+\bO,\bk+\bq)=0\,,\quad j=1,2\,,\qand
\partial_T \bq= \partial_X \bO\,.
\end{equation}
The first two equations in (\ref{mp-wmes}) are
conservation of wave action and the second two equations are imposed as a constraint based on the conservation of waves (see \textsc{Ratliff}~\cite{r17}
for details of the geometric optics approach).

We now derive the multiphase WMEs by a pure multiscale analysis, which is closer
in spirit to the forthcoming validity proof.
In this setting, the Whitham equations are obtained by setting $\eps$ to zero.
Start with the CNLS equations in (\ref{cnls}). Introduce
\[
\Psi_1 = {\rm exp}\big( r_1 + \ri (\phi_1+ \gamma_1 t+\alpha t)\big) \qand
\Psi_2 = {\rm exp}\big( r_2 + \ri (\phi_2+ \gamma_2 t+\alpha t)\big) \,.
\]
Here, $r_1,r_2,\phi_1,\phi_2$ are functions of $(x,t)$ and
$\gamma_1,\gamma_2$, $\alpha$  are the same constants as in \eqref{cnls}. For notational simplicity
 we restrict attention to the case ${\bk}=0$ and $\bw=(\gamma_1+\alpha,\gamma_2+\alpha)$ although
  the general case is completely analogous. Note also that under this restriction the background wave train
  $(r_1,\phi_1,r_2,\phi_2)\equiv0$ corresponds to  a time periodic
  solution $\Psi_i(t)=e^{\ri(\gamma_i+\alpha)t}$ which is constant in space.
  \par
Substitution into (\ref{cnls}) gives
\begin{eqnarray*}
\partial_t r_1
&=& - \partial_x^2 \phi_1 - 2(\partial_x r_1)(\partial_x \phi_1), \\
\partial_t \phi_1
&=& \partial_x^2 r_1 - (\partial_x \phi_1)^2 + (\partial_x r_1)^2 +\gamma_1 (e^{2r_1}-1) + \alpha \big(e^{2r_2}-1\big)\,,\\
\partial_t r_2
&=& - \partial_x^2 \phi_2 - 2(\partial_x r_2)(\partial_x \phi_2)\,, \\
\partial_t \phi_2
&=& \partial_x^2 r_2 - (\partial_x \phi_2)^2 + (\partial_x r_2)^2 + \gamma_2 (e^{2r_2}-1) +\alpha \big(e^{2r_1}-1\big)\,.
\end{eqnarray*}
At this point, the CNLS equation has just been transformed to polar
coordinates with moduli $(\re^{r_1},\re^{r_2})$.
To bring the equations into line with the perturbed Whitham equations
(\ref{M-wmes}),
differentiate the second and fourth equations with respect to $x$ and
introduce the new coordinates
\[
v_1 = \frac{\partial\phi_1}{\partial x} \qand
v_2 = \frac{\partial\phi_2}{\partial x} \,.
\]
Then the governing equations for $\bu:= (r_1,v_1,r_2,v_2)$ are
\begin{equation}\label{eqe12a}
  \partial_t \bu = {\bf M}(\bu)\partial_x\bu + {\bf F}(D_x^3\bu)\,,
\end{equation}
with
\begin{equation}\label{M-characteristics}
{\bf M}(\bu) =
\left[\begin{matrix} -2v_1 & -1 & 0 & 0 \\ 2\gamma_1{\rm exp}(2r_1) & -2v_1 &
    2\alpha{\rm exp}(2r_2)
      & 0 \\ 0 & 0 & -2v_2 & -1 \\ 2\alpha{\rm exp}(2r_1) & 0 & 2\gamma_2{\rm exp}(2r_2) &
    -2v_2 \end{matrix}\right] \,,
\end{equation}
and
\begin{equation}\label{F-def}
  {\bf F}(D_x^3\bu) := \begin{pmatrix} 0\\
    \partial_x^3r_1 + \partial_x(\partial_x r_1)^2 \\
0\\
\partial_x^3r_2 + \partial_x(\partial_x r_2)^2 \end{pmatrix}\,.
\end{equation}
Upon introducing scaled variables, $X=\eps x$ and $T=\eps t$, with $0\le \eps \ll 1$, and
\begin{equation}\label{ap1}
\bu(x,t) = \check{\bu}(X,T,\eps)\,,
\end{equation}
the scaled equations are
\begin{equation}\label{check-ru}
  \partial_T \check{\bu} = {\bf M}(\check{\bu})\partial_X\check{\bu} + \eps^2{\bf F}(D_X^3\check{\bu})\,.
\end{equation}
Formally taking the limit $\eps\to0$ recovers the multi-phase WMEs
\begin{equation}\label{wmes-Mform}
\partial_T\bu = {\bf M}(\bu)\partial_X\bu \,.
\end{equation}
The characteristics of (\ref{wmes-Mform})
have been studied in \textsc{Bridges \& Ratliff}~\cite{br18}
and it is found numerically that the system
can be either hyperbolic, elliptic, or mixed depending on parameter values.
The linear stability of multiphase plane waves of CNLS can be studied
directly, and for parameter
values when CNLS (\ref{cnls})
is integrable extensive linear stability results are
obtained in \textsc{Degasperis, et al.}~\cite{dls18}.
\red{However, in general the correlation between
  characteristic type in the WMEs and linear stability
  in CNLS is not one-to-one, as CNLS
 may have short wave instabilities which are missed
  by the WMEs approximation
  (cf.\ \textsc{Benzoni-Gavage, et al.}~\cite{bgnr}).}

\section{Cauchy-Kowalevskaya theory in Gevrey spaces}
\label{sec-CK-gevrey}
\setcounter{equation}{0}

In this section we prove an abstract local existence theorem
for quasilinear PDEs of the form
\begin{equation}\label{1.PDE}
\partial_T \bu={\bf M}(\bu)\partial_X \bu,\ \ \bu\big|_{T=0}=\bu_0\,.
\end{equation}
Here $X\in\R$ and $T\geq0$ are scaled variables, but since the form of the
equation
(\ref{1.PDE}) is the same in scaled and unscaled variables, the result
applies in either. The unknown vector-valued function
$\bu=(u_1(X,T),\cdots,u_d(X,T))$ is $d-$dimensional.  The matrix
${\bf M}(\bu)$ is
a given entire function and $\bu_0\in \Chi^s_\sigma(\R)$, $s>1$.
Here $\Chi_\sigma^s$ is a Gevrey space defined by the norm
\begin{equation}\label{1.gav}
\|\bu\|_{\Chi^s_\sigma}:=\|e^{\sigma(|\xi|+1)}(1+|\xi|^{2s})^{1/2}\widehat \bu(\xi)\|_{L^2}\,.
\end{equation}
We will use the same notation $\Chi^s_\sigma$ for scalar-valued and
vector-valued functions.

The required analogue of the Cauchy-Kovalevskaya theorem for
equation \eqref{1.PDE} is as follows.  Results of
  this type were first proved by \textsc{Ovsyannikov}~\cite{Ov65};
  see also \textsc{Treves}~\cite{Treves} for an elaboration of theorems
  of Cauchy-Kowalevskaya type.

\begin{theorem}\label{Th1.CK} Let $s>1$ and $\sigma_0>0$. Then, for every $R>0$, there exist $\eta=\eta(R,s,\sigma_0)$
  such that for every $\bu_0\in\Chi_{\sigma_0}^s$, with $\|\bu_0\|_{\Chi_{\sigma_0}^s}\le R$,
  there exists a unique local solution
  $\bu(T)\in \Chi^s_{\sigma(T)}$ of problem \eqref{1.PDE} with $\sigma(T):=\sigma_0-\eta T$,
  $T\in[0,\sigma_0/\eta]$, and $ \sup_{T\in[0,\sigma_0/\eta]} \| u(T) \|_{\Chi^s_{\sigma(T)}} \leq R $.
\end{theorem}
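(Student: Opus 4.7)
The plan is to recast \eqref{1.PDE} as the integral equation
$$\bu(T) = \bu_0 + \int_0^T {\bf M}(\bu(T'))\partial_X\bu(T')\,\rd T',$$
and to solve it by Picard iteration in the Gevrey scale $\{\Chi^s_\sigma\}_{\sigma\in(0,\sigma_0]}$, allowing the radius of analyticity to shrink linearly in time, $\sigma(T)=\sigma_0-\eta T$, so that the single $X$-derivative appearing in the nonlinearity can be absorbed at the cost of the decrease in $\sigma$. This is the standard Ovsyannikov/Nirenberg-Nishida strategy, and the constant $\eta$ is the quantity that will be tuned in terms of $R$, $s$ and $\sigma_0$.

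The argument rests on three properties of $\Chi^s_\sigma(\R)$ that I would establish (or quote from the appendix) at the outset. Since $s>1$, there is a Banach algebra estimate $\|\bu\bv\|_{\Chi^s_\sigma}\leq C_s\|\bu\|_{\Chi^s_\sigma}\|\bv\|_{\Chi^s_\sigma}$ with $C_s$ uniform in $\sigma\in(0,\sigma_0]$; this upgrades by Taylor expansion to an entire-function composition estimate $\|{\bf M}(\bu)\|_{\Chi^s_\sigma}\leq F_{{\bf M}}(\|\bu\|_{\Chi^s_\sigma})$, with a continuous $F_{{\bf M}}$ built from the Taylor coefficients of ${\bf M}$. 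The third, essential property is the derivative-loss inequality
$$\|\partial_X\bu\|_{\Chi^s_{\sigma'}}\leq \frac{1}{e(\sigma-\sigma')}\|\bu\|_{\Chi^s_\sigma},\qquad 0<\sigma'<\sigma,$$
which follows pointwise in Fourier from $|\xi|\,e^{-(\sigma-\sigma')|\xi|}\leq 1/(e(\sigma-\sigma'))$. Combining these yields the bilinear bound
$$\|{\bf M}(\bu)\partial_X\bu\|_{\Chi^s_{\sigma'}}\leq \frac{C\,F_{{\bf M}}(R)\,R}{\sigma-\sigma'} \quad\text{whenever } \|\bu\|_{\Chi^s_\sigma}\leq R,$$
together with an analogous Lipschitz estimate for ${\bf M}(\bu)\partial_X\bu-{\bf M}(\bv)\partial_X\bv$.

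Picard iterates $\bu^{(n+1)}(T)=\bu_0+\int_0^T {\bf M}(\bu^{(n)})\partial_X\bu^{(n)}\,\rd T'$ are then constructed, with the aim of showing $\|\bu^{(n)}(T)\|_{\Chi^s_{\sigma(T)}}\leq R$ for all $T\in[0,\sigma_0/\eta]$ and all $n$, provided $\eta$ is chosen large enough. The main technical obstacle is that, to evaluate the integrand in $\Chi^s_{\sigma(T)}$ starting from $\bu^{(n)}(T')\in\Chi^s_{\sigma(T')}$, the natural intermediate choice $\sigma^*=\tfrac12(\sigma(T)+\sigma(T'))$ yields a contribution of size $1/(\sigma(T')-\sigma^*)=2/(\eta(T-T'))$, which is \emph{not} integrable in $T'$ over $[0,T]$. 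This is precisely where the Nirenberg-Nishida weighted-norm device is invoked: rather than the endpoint norm one works with a weighted norm penalising proximity to the shrinking analyticity boundary (equivalently, estimates are run along a continuous family of intermediate scales parametrised by $T'$), so that the $(T-T')^{-1}$ factor integrates against a vanishing power to yield a uniformly bounded, and for $\eta$ large arbitrarily small, contribution. Once the iterates are shown to be Cauchy in this weighted space, the limit solves \eqref{1.PDE} and satisfies $\sup_T\|\bu(T)\|_{\Chi^s_{\sigma(T)}}\leq R$; uniqueness follows by applying the bilinear Lipschitz estimate to the difference of two solutions and closing with a Gr\"onwall-type argument in the Gevrey scale.
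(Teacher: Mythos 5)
Your proposal is correct in outline, but it follows the classical Ovsyannikov--Nirenberg--Nishida route (integral equation, Picard iteration in a scale of Banach spaces, full-derivative loss recovered from the gap $\sigma-\sigma'$ via $\|\partial_X\bu\|_{\Chi^s_{\sigma'}}\le (e(\sigma-\sigma'))^{-1}\|\bu\|_{\Chi^s_\sigma}$, weighted norms to tame the non-integrable $(T-T')^{-1}$ kernel), whereas the paper argues quite differently: it performs an $L^2$ energy estimate directly in $G^s_{\sigma(T)}$, pairing the equation with $e^{2\sigma(T)(1+A)}(1+A^{2s})\bu$ where $A=\sqrt{-\partial_X^2}$. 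The decisive structural point there is that differentiating the time-dependent weight produces the \emph{dissipative} term $-\eta\|(1+A)^{1/2}\bu\|^2_{G^s_{\sigma(T)}}=-\eta\|\bu\|^2_{G^{s+1/2}_{\sigma(T)}}$, and the quasilinear term is split by Cauchy--Schwarz between $G^{s-1/2}_\sigma$ and $G^{s+1/2}_\sigma$, so only half a derivative must be absorbed on each factor; choosing $\eta>\|{\bf M}(0)\|+\phi_s(R)$ then makes $T\mapsto\|\bu(T)\|_{G^s_{\sigma(T)}}$ non-increasing, which is how the paper obtains the clean bound $\sup_T\|\bu(T)\|_{\Chi^s_{\sigma(T)}}\le R$; existence is supplied separately by vanishing viscosity and uniqueness by a Gronwall argument on the difference equation. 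Your approach buys a self-contained construction (existence, uniqueness and the a priori bound all come out of one contraction) and avoids any regularization, at the price of the fiddlier weighted-norm bookkeeping; the paper's approach buys a differential inequality that is reused almost verbatim for the corrector equations and the remainder estimates in \S\ref{sec-approx} and \S\ref{sec-exactsolutions}, which is presumably why the authors prefer it. Two small points you should attend to if you execute your plan: (i) the Duhamel term is nonnegative, so to get the stated bound $\le R$ rather than $\le R+O(\eta^{-1})$ you need to exploit the strict decay $\|\bu_0\|_{\Chi^s_{\sigma(T)}}\le e^{-\eta T}\|\bu_0\|_{\Chi^s_{\sigma_0}}$ coming from the ``$+1$'' in the Gevrey weight $e^{\sigma(|\xi|+1)}$ (or settle for a ball of radius $2R$, which would force a cosmetic restatement); (ii) the final ``Gronwall-type'' uniqueness step is not a plain Gronwall inequality, since the Lipschitz constant carries the $(\sigma-\sigma')^{-1}$ singularity --- uniqueness should instead be extracted from the contraction property in the weighted scale itself. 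Neither point is a genuine gap.
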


\begin{proof} We first give the formal derivation of a priori estimates
  for $\bu(T)$ in the corresponding
  Gevrey spaces. This derivation can be justified {\it a posteriori}
  in a standard way, for instance, using the vanishing viscosity method
  (which will be briefly elaborated below).

  In the proof, we will need three standard facts about Gevrey spaces:

 \begin{description}

\item{1)} $G_\sigma^s$ is an algebra for $s>\frac12$, i.e., if $u,v\in G_\sigma^s$ then $uv\in G_\sigma^s$ and

\begin{equation}\label{1.alg}
\|uv\|_{G_\sigma^s}\le C_s\|u\|_{G_\sigma^s}\|v\|_{G^s_\sigma},
\end{equation}
where the constant $C_s$ is independent of $\sigma>0$. In the vector-valued
case the product is replaced by an inner product on $\R^d$.
The formula (\ref{1.alg}) can be generalized as follows:
\begin{equation}\label{1.kp}
\|uv\|_{G^s_\sigma}\le C_s\(\|u\|_{G^s_\sigma}\|v\|_{G^1_\sigma}+\|u\|_{G^1_\sigma}\|v\|_{G^s_\sigma}\)
\end{equation}
which holds for all $s\ge0$. The exponent $1$ here can be replaced by any $\kappa>1/2$.

\item{2)} For any entire function $\phi$ such that $\phi(0)=0$ there is an entire function $\phi_s(z)$
which is positive and monotone increasing for $z\in\R_+$ and $\phi_s(0)=0$ such that
\begin{equation}\label{1.f}
\|\phi(u)\|_{G^s_\sigma}\le \phi_s(\|u\|_{G^s_\sigma}),\ \ u\in G^s_{\sigma}.
\end{equation}
\noindent
\item{3)} Let $u\in G^s_\sigma$ for some $s$ and $\sigma$. Then
   \begin{equation}\label{new-inequality}
   \|u\|_{G^{s+p}_{\sigma-\delta}}\le  C(p,\delta)\|u\|_{G^s_\sigma}\,,\quad \delta,p>0,
   \end{equation}
   where $C(p,\delta)$ is independent of $s$ and $\sigma$.
 \end{description}
 \red{
A summary of the basic properties of Gevrey spaces, in the context of parabolic PDEs, is
 given in \textsc{Ferrari \& Titi}~\cite{FT}.  We supplement those results,
 with some proofs that are needed here, in Appendix \ref{app-a}.}

Now let $A:=\sqrt{-\partial_x^2}$, and multiply equation \eqref{1.PDE}
by $e^{2\sigma(T)(1+A)}(1+A^{2s})\bu$ and integrate over $X\in\R$. After standard calculations, this gives
\begin{multline}\label{1.est}
\frac12\frac d{dT}\|\bu\|^2_{G^s_{\sigma(T)}}+\eta\|(1+A)^{1/2}\bu\|^2_{G^s_{\sigma(T)}}=
({\bf M}(\bu)\partial_X\bu,\bu)_{G^s_{\sigma(T)}}=\\
=({\bf M}(\bu)-{\bf M}(0))\partial_X\bu,\bu)_{G^s_{\sigma(T)}}+
({\bf M}(0)\partial_X\bu,\bu)_{G^s_{\sigma(T)}}\,,
\end{multline}
where $(\bu,\bv)_{G^s_{\sigma}}$ is the natural inner product in $G^s_\sigma$.
Now use the Cauchy-Schwarz inequality
$$
(\bu,\bv)_{G^s_\sigma}\le\|\bu\|_{G^{s-1/2}_\sigma}\|\bv\|_{G^{s+1/2}_\sigma}\,,
$$
together with \eqref{1.alg} and \eqref{1.f} and the assumption $s-\frac12>\frac12$, to end up with
\begin{equation*}
\frac12\frac d{dT}\|\bu\|^2_{G^s_{\sigma(T)}}+\eta\|\bu\|^2_{G^{s+1/2}_{\sigma(T)}}\le
\|{\bf M}(0)\|\|\bu\|^2_{G^{s+1/2}_{\sigma(T)}}+\phi_s(\|\bu\|_{G^{s-1/2}_{\sigma(T)}})
\|\bu\|^2_{G^{s+1/2}_{\sigma(T)}}\,,
\end{equation*}
where we have used that
\begin{multline*}
\( ({\bf M}(\bu)-{\bf M}({\bf 0}))\partial_X\bu,\bu\)_{G^s_\sigma}\le
\| ({\bf M}(\bu)-{\bf M}({\bf 0}))\partial_X\bu\|_{G^{s-1/2}_\sigma}\|\bu\|_{G^{s+1/2}_\sigma}\le\\\le
C\|{\bf M}(\bu)-{\bf M}({\bf 0})\|_{G^{s-1/2}_\sigma}\|\bu\|^2_{G^{s+1/2}_\sigma}\le
\phi_s(\|\bu\|_{G^{s-1/2}_\sigma})\|\bu\|^2_{G^{s+1/2}_\sigma}\,,
\end{multline*}
for some smooth monotone increasing function $\phi_s$ associated with ${\bf M}$.

Thus, we arrive at
\begin{equation}\label{1.good}
\frac12\frac d{dT}\|\bu\|^2_{G^s_{\sigma(T)}}+\(\eta-\|{\bf M}(0)\|-\phi_s(\|\bu\|_{G^s_{\sigma(T)}})\)
\|\bu\|^2_{G^{s+1/2}_{\sigma(T)}}\le 0\,,
\end{equation}
\red{for a possibly different $\phi_s(\cdot)$, which for notational simplicity
  we denote with the same symbol, noting that
  $\|u\|_{G^{s-1/2}_\sigma}\le C\|u\|_{G^s_\sigma}$ for some $C\geq1$.}

  \par
  Now, fix $\eta$ in such a way that $\eta>\|{\bf M}(0)\|+\phi_s(R)$. Then estimate \eqref{1.good}
guarantees that 
\begin{equation}\label{key-est}
\|\bu(T)\|_{G^s_{\sigma(T)}}\le R
\end{equation}
until $\sigma(T)$ vanishes.
Recall that the derivation of \eqref{key-est} was formal. However, if we replace $\sigma(T)$ by
$\sigma_\mu(T):=\sigma(T)-\mu$ for any positive $\mu$, all manipulations involved in the derivation of
 \eqref{key-est} become rigorous and all corresponding terms make sense if the solution satisfies
 $\bu(T)\in G^s_{\sigma(T)}$. Importantly, the obtained estimates are uniform with respect to $\mu$,
 so passing to the limit $\mu\to0$, we may justify estimate \eqref{key-est}.
 \par
 To confirm uniqueness, let $\bu_1$ and $\bu_2$ be two analytic solutions
 of \eqref{1.PDE} (belonging to $G^s_{\sigma_0}$ for all $T\in[0,T_0]$ for some $\sigma_0>0$).
 Then the difference $\bv(T):=\bu_1(T)-\bu_2(T)$ satisfies the
  analogue of equation \eqref{1.PDE}:
\begin{equation}
\partial_T\bv={\bf M}(\bu_1)\partial_X{\bf v}+({\bf M}(\bu_1)-{\bf M}(\bu_2))\partial_X\bu_2
\end{equation}
Arguing as before, but using in addition that
$$
\|{\bf M}(\bu_1)-{\bf M}(\bu_2)\|_{G^s_\sigma}\le
(\psi_s(\|\bu_1\|_{G^s_\sigma})+\psi_s(\|\bu_2\|_{G^s_\sigma}))\|\bv\|_{G^s_\sigma},
$$
where $\psi_s$ is a monotone increasing function
associated with ${\bf M}$, we
get the analogue of \eqref{1.good}:
\begin{multline}\label{1.goood}
\frac12\frac d{dT}\|\bv\|^2_{G^s_{\sigma_\mu(T)}}+
\(\eta-\|{\bf M}(0)\|-\phi_s(\|\bu_1\|_{G^s_{\sigma_\mu(T)}})\)
\|\bv\|^2_{G^{s+1/2}_{\sigma_\mu(T)}}\le\\\le  C_s\|\bu_2\|_{G^{s+1}_{\sigma_\mu(T)}}
\(\psi_s(\|\bu_1\|_{G^s_{\sigma_\mu(T)}})+
\psi_s(\|\bu_2\|_{G^s_{\sigma_\mu(T)}})\)\|\bv\|_{G^s_{\sigma_\mu(T)}}^2\,,
\end{multline}
for every $\mu>0$. Taking the exponent $\eta$ large enough, estimating the term
$\|\bu_2\|_{G^{s+1}_{\sigma_\mu(T)}}$ with the help of \eqref{new-inequality} and applying the Gronwall inequality
 to \eqref{1.goood}, we get the desired uniqueness on a possibly smaller interval $T\in[0,T_{00}]$ determined by the
 assumption that $\sigma_\mu(T)>0$. Repeating these arguments on the next time intervals, we get the uniqueness
 on the whole existence interval $[0,T_0]$.
\par
Let us now discuss the role of the vanishing viscosity method in
the existence of a solution.
Let $\beta$ be a small positive parameter and consider the following semilinear
parabolic equation
$$
\partial_T\bu^\beta={\bf M}(\bu^\beta)\partial_X\bu^\beta+\beta\partial^2_X\bu^\beta,\ \ \bu^\beta\big|_{T=0}=\bu_0
$$
Since the operator $A:=\beta\partial_X^2$ generates an analytic
 semigroup in Gevrey spaces $G^s_{\sigma_0}$, the local solution  $\bu^\beta$ can be constructed, say, via the
 contraction mapping principle or the
 implicit function theorem (arguments like this, using the theory of
 analytic semigroups, can be found in \textsc{Henry}~\cite{Henry}).
 Crucial for the method is that all of the
 above estimates are {\it uniform} with respect to $\beta$ (the extra term
 $\beta\|\bu^\beta\|^2_{G^s_{\sigma(T)}}$ is
 non-negative and does not impact the estimates). Thus, we have \eqref{key-est}
  for $\bu^\beta(T)$ which is uniform with respect to $\beta$. Passing to the limit $\beta\to0$,
  we get the desired solution $\bu$ of \eqref{1.PDE}.  This
  completes the proof of the theorem.
\end{proof}

\begin{remark} Arguing analogously, it is possible to obtain a similar result
  for the slightly more general equation:
\begin{equation}\label{1.eqfg}
\Dt \bu={\bf M}(\bu)\partial_X\bu+{\bf f}(\bu),\ \ \bu\big|_{T=0}=\bu_0,
\end{equation}
where ${\bf f}(\bu)$ is another entire function satisfying ${\bf f}(0)=0$.
\end{remark}

\subsection{Local existence for the multiphase WMEs}
\label{subsec-MPWMEs-existence}

Local existence for the multiphase WMEs now follows by applying
Theorem \ref{Th1.CK} to the system (\ref{wmes-Mform}).
The matrix ${\bf M}(\bu)$ in (\ref{M-characteristics})
is an entire function and $d=4$.  Hence, regardless of whether
the multiphase WMEs are hyperbolic, elliptic or of mixed type,
local existence will follow in Gevrey spaces.
\vspace{.15cm}

\begin{theorem}\label{theoremA}
  Let $s>1$ and $ \sigma_0 > 0 $, and let
  \[
  \bu^*(X,T)\Big|_{T=0} = \bu_0(X):=(r_{10}^*(X),v_{10}^*(X),r_{20}^*(X),v_{20}^*(X))\,.
   \]
  Then there exist
$T_0>0$ and $ C > 0 $ such that
for all initial data $\bu^*(X,0)\in \Chi^s_{2\sigma_0}$
with  $\| \bu^*(X,0)\|_{\Chi^{s}_{2\sigma_0}} \leq C $,
the multiphase WMEs (\ref{wmes-Mform})
have a unique solution
\[
\bu^*(X,T)\in C\left([0,T_0],\Chi^{s}_{\sigma_0}\right)\quad\mbox{with}
\quad \bu^*(X,T)\Big|_{T=0}=\bu_0\,.
\]
\end{theorem}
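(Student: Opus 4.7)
The plan is to invoke Theorem \ref{Th1.CK} directly, applied to the multiphase WME system (\ref{wmes-Mform}) with coefficient matrix ${\bf M}(\bu)$ as in (\ref{M-characteristics}) and spatial dimension $d=4$, but with the initial Gevrey parameter in Theorem \ref{Th1.CK} chosen to be $2\sigma_0$ rather than $\sigma_0$. The role of the factor of two is to provide a buffer so that the time-decaying Gevrey parameter $\sigma(T)=2\sigma_0-\eta T$ produced by Theorem \ref{Th1.CK} still satisfies $\sigma(T)\ge\sigma_0$ on a uniform time interval $[0,T_0]$, ensuring that a single fixed target space $\Chi^s_{\sigma_0}$ accommodates the whole trajectory.

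First I would verify the hypotheses of Theorem \ref{Th1.CK}. Every entry of ${\bf M}(\bu)$ in (\ref{M-characteristics}) is either constant, linear in one component of $\bu$, or of the form $\mathrm{exp}(2r_j)$, each of which is an entire function of the four components of $\bu=(r_1,v_1,r_2,v_2)$, so ${\bf M}(\cdot)$ is entire. The present assumption $s>1$ is precisely the hypothesis of Theorem \ref{Th1.CK}. Setting $R:=C$ in Theorem \ref{Th1.CK} with initial Gevrey exponent $2\sigma_0$ produces $\eta=\eta(C,s,2\sigma_0)>0$ such that, for any initial datum $\bu^*(\cdot,0)\in\Chi^s_{2\sigma_0}$ satisfying $\|\bu^*(\cdot,0)\|_{\Chi^s_{2\sigma_0}}\le C$, there is a unique solution $\bu^*(\cdot,T)\in\Chi^s_{\sigma(T)}$ with $\|\bu^*(\cdot,T)\|_{\Chi^s_{\sigma(T)}}\le C$ on $[0,2\sigma_0/\eta]$. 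I then set $T_0:=\sigma_0/\eta$, so that $\sigma(T)\in[\sigma_0,2\sigma_0]$ for every $T\in[0,T_0]$. Because the Gevrey weight $e^{\sigma(|\xi|+1)}(1+|\xi|^{2s})^{1/2}$ is pointwise non-decreasing in $\sigma$, there is a norm-one embedding $\Chi^s_{\sigma(T)}\hookrightarrow\Chi^s_{\sigma_0}$, which immediately gives $\|\bu^*(\cdot,T)\|_{\Chi^s_{\sigma_0}}\le C$ throughout $[0,T_0]$.

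To upgrade this pointwise-in-$T$ bound to the claimed continuity $\bu^*\in C([0,T_0],\Chi^s_{\sigma_0})$, I would fix $T\in[0,T_0)$, pick any $\sigma'\in(\sigma_0,\sigma(T))$, and use the equation $\partial_T\bu^*={\bf M}(\bu^*)\partial_X\bu^*$ together with the algebra property (\ref{1.alg}), the composition bound (\ref{1.f}) applied to ${\bf M}(\bu^*)-{\bf M}(0)$, and the loss-of-regularity inequality (\ref{new-inequality}) (trading the extra $\partial_X$ for a small decrease of the Gevrey parameter from $\sigma'$ down to $\sigma_0$) to control $\partial_T\bu^*$ in $\Chi^s_{\sigma_0}$ uniformly on a neighbourhood of $T$. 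This yields Lipschitz-in-$T$ (hence continuous) dependence with values in $\Chi^s_{\sigma_0}$. Uniqueness within this class is inherited directly from the uniqueness assertion already established inside Theorem \ref{Th1.CK}.

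The principal difficulty of such a local existence result for (\ref{wmes-Mform})---namely the indefiniteness of ${\bf M}(\bu)\partial_X$, since, as recalled in the text following (\ref{wmes-Mform}), the multiphase WMEs may be elliptic, hyperbolic, or of mixed type---has already been absorbed inside Theorem \ref{Th1.CK} by the use of Gevrey rather than Sobolev spaces. After that, the remaining work is bookkeeping: arranging the factor-of-two buffer between $2\sigma_0$ and $\sigma_0$ in the Gevrey parameter so that the time-decay of analyticity radius does not push the solution out of $\Chi^s_{\sigma_0}$ before time $T_0$.
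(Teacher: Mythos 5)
Your proposal is correct and follows exactly the paper's route: the paper proves Theorem \ref{theoremA} simply by invoking Theorem \ref{Th1.CK} for the system (\ref{wmes-Mform}), noting that ${\bf M}(\bu)$ in (\ref{M-characteristics}) is entire with $d=4$, so that existence holds regardless of characteristic type. Your additional bookkeeping --- starting the Gevrey parameter at $2\sigma_0$ so that $\sigma(T)\ge\sigma_0$ on $[0,T_0]$ with $T_0=\sigma_0/\eta$, using the monotone embedding $\Chi^s_{\sigma(T)}\hookrightarrow\Chi^s_{\sigma_0}$, and checking continuity in $T$ --- is exactly the implicit content of the paper's one-line argument.
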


\section{Approximate solutions for the perturbed problem}
\label{sec-approx}
\setcounter{equation}{0}

In this section, a continuation of the abstract theory of
\S\ref{sec-CK-gevrey} is given for the case when there is a perturbation
of the quasilinear system (\ref{1.PDE}) of the form
\begin{equation}\label{2.PDE}
\Dt \bu={\bf M}(\bu)\partial_X \bu+\eb {\bf F}(D^k_X\bu),\ \ \bu\big|_{T=0}=\bu_0\,,
\end{equation}
where $D^k_X \bu=\{\bu,\partial_X \bu,\cdots,\partial^k_X \bu\}$, $\eb\in\R$ is a small parameter and ${\bf F}$ is a
given entire function satisfying ${\bf F}(0)=0$.  In this section $X\in \R$
and $T\geq 0$ are scaled variables.  Here $\eb$ is an arbitrary small parameter but will be restored to $\eb=\eps^2$ when the theory is applied to
the system (\ref{u-M-form}).

We seek an approximation to the solution $\bu$ in the form
 \begin{equation}\label{2.taylor}
\bu(X,T,\nu)=\bu^0(X,T)+\eb \bu^1(X,T)+\eb^2\bu^2(X,T)+\cdots
 \end{equation}
Inserting these expansions into equation \eqref{2.PDE} and equating the terms with the
same powers of $\eb$, we get at $\eb^0$,
\begin{equation}\label{2.PDE0}
\Dt \bu^0={\bf M}(\bu^0)\partial_X \bu^0\,,\quad\mbox{with}\ \bu^0\big|_{T=0}=\bu_0\,,
\end{equation}
which coincides with equation \eqref{1.PDE} studied earlier.
The higher-order terms $\bu^n$,
$n\ge1$ can be found by solving the inhomogeneous equations of variation
associated with problem \eqref{2.PDE0}:
\begin{equation}\label{2.app}
\Dt \bu^n-{\bf M}(\bu^0)\partial_X \bu^n-\red{D_{\bf u}}{\bf M}(\bu^0)\bu^n\partial_X \bu^0={\bf F}_n(\bu^0,\cdots,\bu^{n-1}),\ \mbox{with}\ \bu^n\big|_{T=0}=0,
\end{equation}
where ${\bf F}_n$ is an entire function of the lower order approximations
$\bu^0,\cdots,\bu^{n-1}$ and their
derivatives \red{up to order $k$} satisfying ${\bf F}_n(0)=0$.
Multiplying this equation
 by $e^{2\sigma(T)(A+1)}(1+A^{2s})\bu^n(T)$ and
arguing as in the proof of Theorem \ref{Th1.CK}, we arrive at
\begin{multline}
\frac12\frac d{dT}\|\bu^n\|^2_{G^s_{\sigma(T)}}+
\(\eta-\phi_s(\|\bu^0\|_{G^{s}_{\sigma(T)}})\)
\|\bu^n\|^2_{G^{s+1/2}_{\sigma(T)}}\le\\ \le
\psi_s\Big( \|\bu^0\|_{G^{s+1}_\sigma}\Big)\|\bu^n\|_{G^s_\sigma}^2+C\|{\bf F}_n\|_{G^s_{\sigma(T)}}^2,
\end{multline}
where $\phi_s$ and $\psi_s$ are real analytic monotone increasing functions depending only on ${\bf M}$ and $s$.
 We see that the solvability condition

 \begin{equation}\label{2.solv}
\eta-\phi_s(\|\bu^0\|_{G^{s}_{\sigma(T)}})>0,\ \ T<\sigma_0/\eta
 \end{equation}
 for this inequality depends only on the initial local solution $\bu^0(T)$, so the lifespan
  of every of $\bu^n$ is determined by the properties of $\bu^0(T)$ only. Indeed, if \eqref{2.solv}
   is satisfied, we get the recursive estimate
   \begin{equation}\label{2.rec}
   \|\bu^n(T)\|_{G^s_{\sigma(T)}}^2\le C e^{KT}
   \sup_{0\le\tau\le T}\|{\bf F}_n\big(\bu^0(\tau),\cdots,\bu^{n-1}(\tau)\big)\|_{G^s_{\sigma(\tau)}}^2\,,
   \end{equation}
for some positive constants $C$ and $K$ depending on $\bu^0$. Since ${\bf F}_n$ depends only
on $\bu^0, \cdots, \bu^{n-1}$, estimate \eqref{2.rec} allows us to construct
the correctors $\bu^n$ recursively solving the linear equation
 \eqref{2.app} at every step (the existence of a solution for problem \eqref{2.app} can be proved
  exactly as in Theorem \ref{Th1.CK}). Indeed, it follows from the vector-valued version of
  \eqref{1.f} that
  %$$
\begin{equation}\label{2.rrec}
  \|{\bf F}_n\|_{G^s_{\sigma}}\le F_s\(\|\bu^0\|_{G^{s+k}_\sigma}+\cdots+\|\bu^{n-1}\|_{G^{s+k}_{\sigma}}\)
\end{equation}
  %$$
for $s>\frac12$, $\sigma\ge0$ and some smooth monotone increasing function $F_s$ with $F_s(0)=0$.

   However, there is still a small problem
   with iterating inequalities \eqref{2.rec} and \eqref{2.rrec}, namely, the number
    of derivatives of $\bu^0$ which we need to control in order to estimate ${\bf F}_n$
   grows with $n$, so in order to get the $G^s_\sigma$-norm of ${\bf F}_n$, we need at
   least a $G^{s+kn}_\sigma$-norm
   of $\bu^0$. Therefore, performing iterations in a straightforward way will  require
    the solvability condition \eqref{2.solv} to be satisfied not only for $s$, but also for
    $s+k$, $s+2k$ and so on.
    Since $\phi_s$ is growing in $s$, we will have to take $\eta$ depending on $n$ and shrink the
    existence interval for the correctors when $n$ is growing.     To overcome this and to
    get the  lifespan uniform with respect to $n$ for the correctors $\bu^n$, we will
     proceed in
     an alternative way avoiding  increasing the exponent $s$, but decreasing slightly the analyticity
     exponent $\sigma$ and utilizing \eqref{new-inequality}.
   This gives the following result.
   \begin{theorem}\label{Th2.sol} Let $s>1$ and let the local solution $\bu^0(T)$ of equation
    \eqref{2.PDE} satisfying \eqref{2.solv}. Then, for every $\delta>0$, the correctors $\bu^n(T)$
    satisfy the estimate
    \begin{equation}\label{2.corest}
    \|\bu^n(T)\|_{G^s_{\sigma(T)-\delta}}\le Q_{\delta,n}
    \(\sup_{0\le \tau\le T}\|\bu^0(\tau)\|_{G^{s}_{\sigma(\tau)}}\),\ \ T\le (\sigma_0{\color{black}-\delta})/\eta
    \end{equation}
     for some monotone increasing function $Q_{\delta,n}$.
   \end{theorem}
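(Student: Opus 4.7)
My plan is to argue by induction on $n$, compensating the loss of $k$ derivatives per iteration that appears in \eqref{2.rrec} not by increasing the Sobolev index $s$ --- which would degrade the solvability condition \eqref{2.solv} --- but by shrinking the Gevrey radius by a controlled amount at each step, using the derivative-trade inequality \eqref{new-inequality}. The base case $n=0$ is the hypothesis that $\bu^0$ satisfies \eqref{2.solv}.

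Fix $\delta>0$ and partition the permitted loss uniformly: set $\delta_j := j\delta/n$ for $j=0,\ldots,n$. I would strengthen the inductive claim to: for each $0\le j\le n$, the corrector $\bu^j$ exists on $T\in[0,(\sigma_0-\delta)/\eta]$ and satisfies
\[
\|\bu^j(T)\|_{G^s_{\sigma(T)-\delta_j}}\le Q_j\!\left(\sup_{0\le\tau\le T}\|\bu^0(\tau)\|_{G^s_{\sigma(\tau)}}\right)
\]
for some monotone increasing $Q_j$ depending on $\delta,j,\mathbf{M},\mathbf{F}$. To construct $\bu^n$, I rerun the energy computation from the proof of Theorem \ref{Th1.CK} on the linearised equation \eqref{2.app}, but with the weight $\tilde\sigma(T):=\sigma(T)-\delta$ replacing $\sigma(T)$. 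Since $\tilde\sigma<\sigma$ and the Gevrey norm is monotone in its radius, the smallness condition $\eta>\|{\bf M}(0)\|+\phi_s(\|\bu^0\|_{G^s_{\tilde\sigma(T)}})$ is inherited from \eqref{2.solv}; the lower-order term $\psi_s(\|\bu^0\|_{G^{s+1}_{\tilde\sigma(T)}})\|\bu^n\|^2_{G^s}$ is absorbed into the Gronwall prefactor after bounding $\|\bu^0\|_{G^{s+1}_{\tilde\sigma(T)}}\le C(1,\delta)\|\bu^0\|_{G^s_{\sigma(T)}}$ via \eqref{new-inequality}. The output is the shifted analogue of \eqref{2.rec},
\[
\|\bu^n(T)\|^2_{G^s_{\sigma(T)-\delta}}\le C e^{KT}\sup_{0\le\tau\le T}\|{\bf F}_n(\bu^0(\tau),\ldots,\bu^{n-1}(\tau))\|^2_{G^s_{\sigma(\tau)-\delta}}.
\]

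The induction is closed by bounding the inhomogeneity through \eqref{2.rrec}: for each $j<n$ the radius gap $\delta-\delta_j=(n-j)\delta/n$ is bounded below by $\delta/n>0$, so \eqref{new-inequality} yields $\|\bu^j\|_{G^{s+k}_{\sigma(T)-\delta}}\le C(k,(n-j)\delta/n)\|\bu^j\|_{G^s_{\sigma(T)-\delta_j}}$, and the right-hand side is, by the inductive hypothesis, a monotone increasing function of $\sup_\tau\|\bu^0(\tau)\|_{G^s_{\sigma(\tau)}}$. Composing these bounds through $F_s$ and the Gronwall prefactor produces the desired $Q_{\delta,n}:=Q_n$. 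Genuine existence of $\bu^n$ (not merely the a priori estimate) follows, exactly as in Theorem \ref{Th1.CK}, from a vanishing-viscosity regularisation of the linearisation, whose coefficients involve only entire functions of the already-constructed $\bu^0$ and are therefore available in every needed Gevrey norm.

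The main technical obstacle is the bookkeeping of radii: the loss $\delta$ must be fixed in advance, because the constants $C(k,\delta/n)$ in \eqref{new-inequality} blow up as $n\to\infty$, and this unavoidably enters the $n$-dependence of $Q_{\delta,n}$. Conceptually, the essential gain of this scheme over the naive iteration in $s$ is that the solvability condition \eqref{2.solv} stays at the \emph{fixed} Sobolev index $s$ for every corrector, which is precisely what guarantees that all $\bu^n$ share the common lifespan $[0,(\sigma_0-\delta)/\eta]$ rather than a sequence of progressively shrinking intervals.
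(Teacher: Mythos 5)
Your proposal is correct and follows essentially the same route as the paper's proof: both avoid raising the Sobolev index $s$ (which would degrade the solvability condition \eqref{2.solv} and shrink the lifespan) and instead trade the $k$ derivatives lost at each iteration of \eqref{2.rrec} for a small, pre-allocated decrease of the Gevrey radius via \eqref{new-inequality}, closing the recursion with \eqref{2.rec}. The only difference is cosmetic: you split the total loss $\delta$ uniformly as $\delta_j=j\delta/n$, whereas the paper uses the geometric partition $\delta_n=\delta\sum_{l=1}^n 2^{-l}$; both give positive gaps at every step and hence the same uniform lifespan $T\le(\sigma_0-\delta)/\eta$ and an $n$-dependent monotone function $Q_{\delta,n}$.
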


   \begin{proof} {\color{black} Let $0<\delta<\sigma_0$ be arbitrary.
   From estimates \eqref{2.rrec} and \eqref{new-inequality}, we get
\begin{equation}\label{2.rrrec}
  \|{\bf F}_n\|_{G^s_{\sigma-\delta/2^n}}\le
   F_{s,n}\(\|\bu^0\|_{G^{s}_\sigma}+\cdots+\|\bu^{n-1}\|_{G^{s}_{\sigma}}\)
\end{equation}
for some monotone increasing function $F_{s,n}$ which is independent of $\sigma$ and $\delta$. Combining this estimate with \eqref{2.rec},
and introducing $\delta_n:=\delta\sum_{l=1}^n2^{-l}$, we have
$$
\|\bu^n(T)\|_{G^s_{\sigma(T)-\delta_n}}\le
 CF_{s,n}\(\sup_{0\le\tau\le T}\|\bu^0(\tau)\|_{G^s_{\sigma(\tau)-\delta_{n-1}}}+
\cdots+\sup_{0\le\tau\le T}\|\bu^{n-1}(\tau)\|_{G^s_{\sigma(\tau)-\delta_{n-1}}}\)\,.
$$
}
Iterating this estimate, we finally arrive at \eqref{2.corest}.
\end{proof}

Now introduce the $n$-th order approximations
\begin{equation}
%\label{2.app}
\widetilde \bu^n(T):=\bu^0(T)+\eb \bu^1(T)+\cdots +\eb^n \bu^n(T)\,,
\end{equation}
and the corresponding residuals
\begin{equation}\label{2.res}
\textsf{Res}^n(T):=\Dt \widetilde \bu^n(T)-{\bf M}(\widetilde \bu^n(T))\partial_X \widetilde \bu^n(T)-\eb {\bf F}(D^k\widetilde \bu^n(T))\,.
\end{equation}
{\color{black} Since the equations for the correctors $\bu^l$, $l=0,\cdots,n$
are obtained by equating to zero the coefficients in front of $\nu^l$ in Taylor expansions
 of \eqref{2.res} in $\nu$, we have
\begin{equation}
\textsf{Res}^n(T)=\nu^{n+1}\Phi_n(\nu,D^k\bu^0(T),\cdots,D^k\bu^n(T))
\end{equation}
for some function $\Phi_n$ which is analytic with respect to all arguments.}
Applying the theorem {\color{black} and estimate \eqref{1.f}} then gives the following.
\begin{corollary}\label{Cor2.res} Let $\bu^0(T)$, $T<\sigma_0/\eta$ be the local solution of equation
 \eqref{2.PDE} satisfying the condition \eqref{2.solv} for some $s>1$. Then, for every $\delta>0$,
  the approximate solutions $\widetilde \bu^n(T)$ and residuals $\textsf{Res}^n(T)$ satisfy:
  \begin{multline}
\|\widetilde \bu^n(T)\|_{G^s_{\sigma(T)-\delta}}\le \Phi_{n,\delta}\(
{\color{black}\sup_{0\le\tau\le T}\|\bu^0(\tau)\|_{G^s_{\sigma(\tau)}}}\)\qand\\
\|\textsf{Res}^n(T)\|_{G^s_{\sigma(T)-\delta}}\le \eb^{n+1}\Phi_{n,\delta}
\({\color{black}\sup_{0\le\tau\le T}\|\bu^0(\tau)\|_{G^s_{\sigma(\tau)}}}\)
  \end{multline}
 for $T<(\sigma_0-\delta)/\eta$
 {\color{black} and some monotone increasing function  $\Phi_{n,\delta}$ depending on $n,\delta$
  and $s$}.
\end{corollary}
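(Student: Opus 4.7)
The plan is to derive both estimates as essentially direct consequences of Theorem~\ref{Th2.sol} combined with the entire function estimate \eqref{1.f} and the derivative estimate \eqref{new-inequality}; the statement about $\textsf{Res}^n$ being $\nu^{n+1}$ times an analytic function of the correctors has already been recorded just before the corollary, so there are really two computational steps and one bookkeeping step.

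For the first estimate, I would start from the definition $\widetilde\bu^n = \sum_{l=0}^n \nu^l \bu^l$ and use the triangle inequality to write
$$
\|\widetilde\bu^n(T)\|_{G^s_{\sigma(T)-\delta}} \le \sum_{l=0}^n |\nu|^l\,\|\bu^l(T)\|_{G^s_{\sigma(T)-\delta}}.
$$
For $l=0$ the right-hand side is bounded by $\sup_{0\le\tau\le T}\|\bu^0(\tau)\|_{G^s_{\sigma(\tau)}}$ since $\sigma(T)-\delta<\sigma(T)$. For $l\ge 1$, Theorem~\ref{Th2.sol} gives $\|\bu^l(T)\|_{G^s_{\sigma(T)-\delta}} \le Q_{\delta,l}\!\left(\sup_{0\le\tau\le T}\|\bu^0(\tau)\|_{G^s_{\sigma(\tau)}}\right)$. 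A finite sum of monotone increasing functions of the same argument (with $|\nu|$ bounded, which may be absorbed into $n$-dependent constants) is itself such a function, yielding the first inequality.

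For the residual estimate, the key input is the representation $\textsf{Res}^n(T) = \nu^{n+1}\Phi_n(\nu,D^k\bu^0(T),\ldots,D^k\bu^n(T))$ with $\Phi_n$ entire in all its arguments. The prefactor $\nu^{n+1}$ is precisely the $\eb^{n+1}$ announced in the statement, so the task reduces to bounding $\Phi_n$ in $G^s_{\sigma(T)-\delta}$. First I would apply \eqref{new-inequality} with radius loss $\delta/2$ to convert $k$-fold derivatives into $G^s$-norms, obtaining
$$
\|\partial_X^j\bu^l\|_{G^s_{\sigma(T)-\delta}} \le C(j,\delta/2)\,\|\bu^l\|_{G^s_{\sigma(T)-\delta/2}},\qquad 0\le j\le k,\ 0\le l\le n.
$$
Then \eqref{1.f}, applied in its vector-valued form to the entire function $\Phi_n$ evaluated at $(\nu,D^k\bu^0,\ldots,D^k\bu^n)$, bounds $\|\Phi_n\|_{G^s_{\sigma(T)-\delta}}$ by a monotone increasing function of $|\nu|$ and the $G^s_{\sigma(T)-\delta/2}$-norms of the correctors. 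Finally, Theorem~\ref{Th2.sol} applied with $\delta/2$ in place of $\delta$ bounds each $\|\bu^l(T)\|_{G^s_{\sigma(T)-\delta/2}}$ by a monotone increasing function of $\sup_{0\le\tau\le T}\|\bu^0(\tau)\|_{G^s_{\sigma(\tau)}}$. Composing these monotone increasing functions produces the required $\Phi_{n,\delta}$.

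The main obstacle, if it can be called one, is purely bookkeeping: the cumulative loss of analyticity radius is $\delta/2$ (to convert derivatives via \eqref{new-inequality}) plus another $\delta/2$ absorbed inside Theorem~\ref{Th2.sol}, totalling $\delta$, and one must check that the existence interval from Theorem~\ref{Th2.sol} used with parameter $\delta/2$ comfortably contains the stated interval $T<(\sigma_0-\delta)/\eta$. No genuinely new analytical input beyond Theorem~\ref{Th2.sol} and the standard Gevrey facts \eqref{1.f}, \eqref{new-inequality} is needed.
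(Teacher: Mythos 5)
Your proposal is correct and follows essentially the same route as the paper: the paper likewise combines the representation $\textsf{Res}^n=\nu^{n+1}\Phi_n(\nu,D^k\bu^0,\ldots,D^k\bu^n)$ with Theorem~\ref{Th2.sol} and the entire-function estimate \eqref{1.f}, the derivative/radius trade-off via \eqref{new-inequality} being exactly the mechanism already built into the proof of Theorem~\ref{Th2.sol}. Your explicit $\delta/2+\delta/2$ bookkeeping just spells out what the paper leaves implicit.
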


\begin{remark} Note that Theorem \ref{Th2.sol} and Corollary \ref{Cor2.res} are proved under
the assumption that there exists a local solution $\bu^0(T)$, $T<\sigma_0/\eta$, which satisfies
condition \eqref{2.solv}. In this case the lifespan of the approximate solution $\widetilde \bu^n(T)$
remains the same as the lifespan $T=\sigma_0/\eta$ no matter how big $n$ is. However, if we just
have an analytic local solution $\bu^0(T)$ of equation \eqref{2.PDE} defined on the interval
$T\in[0,T_0]$ such that
$$
\|\bu^0(T)\|_{G^{s}_{\sigma_0-\eta' T}}\le R,\ \  T_0\le \sigma_0/\eta'\,,
$$
for some positive $\sigma_0$ and non-negative $\eta'$, then the key assumption \eqref{2.solv} is
not automatically satisfied. In order to satisfy it we need to increase $\eta'$ till
$$
\eta=\phi_s(R)
$$
and this decreases the lifespan of the approximate solution $\widetilde \bu^n(T)$ till $T_1:=\sigma_0/\phi_s(R)$.
Thus, in this general situation we can only guarantee the existence of analytic
approximate solutions $\widetilde \bu^n(T)$ on a smaller interval $t\in[0,T_1]$ than the initial lifespan $T_0$
of the solution $\bu^0(T)$.
\end{remark}

The finite Taylor expansions, proved to exist in this
section, generate higher-order
corrections to multiphase Whitham modulation theory.
Heretofore higher order WMEs
have only been studied in the single phase case (e.g.\ \textsc{Luke}~\cite{luke66} and \S2 of \textsc{D\"ull \& Schneider}~\cite{ds09}).
Although these expansions have uniform lifespan with respect to $n$,
 the series \eqref{2.taylor} are usually divergent and do not give the exact solution of the perturbed problem
 no matter how small $\eb$ is. In any case, the
 higher-order corrections from this section are useful for
 generating higher order
 in $\eb$ estimates on the residuals (see Theorem \ref{thrm7.1} below).
 We will take up filling the gap between $\widetilde{\bu^n}$ and the exact
 solution in the next section.

\section{Exact solutions in Gevrey spaces}
\label{sec-exactsolutions}
\setcounter{equation}{0}

We now look at the validity question: how well do solutions of
the multiphase WMEs approximate the solutions of CNLS.
The starting point is the exact equations for CNLS in
$(r_j,v_j)$ coordinates rewritten here as
\begin{equation}\label{3.WME}
  \partial_T \bu = {\bf M}(\bu)\partial_X\bu + \nu{\bf F}(D_X^3\bu)\,,\quad \bu\Big|_{T=0}=\bu_0\,,
\end{equation}
with ${\bf M}(\bu)$ defined in (\ref{M-characteristics}) and ${\bf F}$
defined in (\ref{F-def}).
The independent variables $X,T$ are scaled variables, and for
notational convenience we have dropped the circumflex on $\bu$ and
used $\nu$ as the small parameter.   Translation to other notations
will be straightforward {\it a posteriori}.

This system has the form of \eqref{2.PDE}
with $d=4$ and $k=3$, and so the results generated in the previous sections
will carry over. In particular,
the limit system \eqref{3.WME} with $\eb=0$ (which is exactly the WMEs) has a unique local analytic
solution $\bu^0(T)$ and, when $\nu\neq0$, the approximate solutions
$\widetilde \bu^n(T)$ are well-defined and satisfy the
 estimates of Theorem \ref{Th2.sol} and Corollary \ref{Cor2.res}.

 We seek the desired exact solution of (\ref{3.WME}) in the form
 \begin{equation}\label{3.an}
\bu=\widetilde \bu^n+\bv :=(\widetilde r_1^n+R_1,\widetilde v_1^n+V_1,\widetilde r_2^n+R_2,\widetilde v_2^n+V_2)\,.
 \end{equation}
 For notational convenience, the superscript $n$ will be dropped on $\widetilde\bu^n$.
 Inserting this ansatz into equations \eqref{3.WME}, we end up with the
 following equations,
 \begin{equation}\label{3.error}
\begin{array}{rcl}
\Dt R_1 &=&-\partial_X V_1-2\widetilde v_1\partial_X R_1-2V_1\partial_X\widetilde r_1-2V_1\partial_X R_1+\textsf{Res}_1^n,\\[2mm]
\Dt V_1 &=&-\partial_X(V_1)^2-2\partial_X(\widetilde v_1 V_1)
+
2\gamma_1\partial_X\(e^{2\widetilde r_1}(e^{2R_1}\!-\!1)\)\\[2mm]
&&\quad +
2\alpha\partial_X\(e^{2\widetilde r_2}(e^{2R_2}\!-\!1)\)
+\eb\partial_X^3R_1+\eb\partial_X(\partial_X R_1)^2\\[2mm]
&&\quad\quad +2\eb\partial_X(\partial_X\widetilde r_1\partial_XR_1)+\textsf{Res}_2^n;\\[2mm]
\Dt R_2 &=&-\partial_X V_2-2\widetilde v_2\partial_X R_2-2V_2\partial_X\widetilde r_2-2V_2\partial_X R_2+\textsf{Res}_3^n,\\[2mm]
\Dt V_2 &=&-\partial_X(V_2)^2-2\partial_X(\widetilde v_2 V_2)
+ 2\gamma_2\partial_X\(e^{2\widetilde r_2}(e^{2R_2}\!-\!1)\)\\[2mm]
&&\quad +
2\alpha\partial_X\(e^{2\widetilde r_1}(e^{2R_1}\!-\!1)\)
+\eb\partial_X^3R_2+\eb\partial_X(\partial_X R_2)^2\\[2mm]
&&\quad\quad
+2\eb\partial_X(\partial_X\widetilde r_2\partial_XR_2)+\textsf{Res}_4^n\,,
\end{array}
 \end{equation}
 {\color{black} where the residuals ${\bf Res}^n:=
  (\textsf{Res}_1^n,\textsf{Res}_2^n,\textsf{Res}_3^n,\textsf{Res}_4^n)$
are defined by \eqref{2.res}.}
\par
 These equations are endowed with zero initial conditions.
 Our task is now to verify that they have a unique analytic
 local solution which is of order
 $\eb^{n+1}$. This will be our next theorem.
\begin{theorem}\label{thrm7.1} Suppose the approximate solution $\widetilde \bu^n(T)\in G^{s+1/2}_{\sigma(T)}$ of problem \eqref{3.WME}
satisfies the analogue of \eqref{2.solv}
\begin{equation}\label{3.solv}
\eta-\phi\(\|\widetilde \bu^n\|_{G^{s+1/2}_{\sigma(T)}}\)>0
\end{equation}
for the properly chosen constant $\eta>0$, $s>2$, $\sigma_0>0$, smooth monotone function $\phi$ (depending on $s$) and all $T<\sigma_0/\eta$.
Assume also that the residual satisfies
\begin{equation}\label{3.ress}
\|\textbf{Res}^n(T)\|_{G^{s}_{\sigma(T)}}^2\le Q_n(\widetilde \bu^0)\eb^{2(n+1)}\,.
\end{equation}
  Then, for sufficiently small $\eb>0$, system \eqref{3.error} possesses a
  unique solution $\bv(T)\in G^{s-1}_{\sigma(T)}$, \red{satisfying $\bv(0)=0$},
  and the following estimate holds:
 \begin{equation}\label{key-v}
 \|\bv(T)\|^2_{E^s_{\sigma(T)}}  \le  C\eb^{2(n+1)}Q_{n}(\widetilde \bu^0)\,,
\end{equation}
for all $T<\sigma_0/\eta$, where
\begin{equation}\label{Enorm-def}
\|\bv(T)\|^2_{E^s_{\sigma(T)}} :=
  \|\bv(T)\|^2_{G^{s-1}_{\sigma(T)}}+
  \eb\(\|R_1(T)\|^2_{G^{s}_{\sigma(T)}}+\|R_2(T)\|_{G^{s}_{\sigma(T)}}^2\)\,.
\end{equation}
\end{theorem}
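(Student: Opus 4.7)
The plan is to combine short-time existence of an analytic solution $\bv$ to \eqref{3.error}, obtained by the same vanishing-viscosity scheme as in Theorem \ref{Th1.CK} (since \eqref{3.error} has the form \eqref{2.PDE} with a smooth residual source and $\bv(0)=0$), with a weighted energy estimate in the composite norm $E^s_{\sigma(T)}$ defined in \eqref{Enorm-def}, followed by a standard bootstrap/continuation argument. The a priori estimate is the heart of the matter: assuming a bootstrap hypothesis $\|\bv(T)\|_{E^s_{\sigma(T)}}\le K\nu^{n+1}$ on some subinterval, I aim to prove the strict improvement \eqref{key-v}, which for $\nu$ small closes the bootstrap and simultaneously extends the solution to the full interval $T<\sigma_0/\eta$.

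First, I would test the four components of \eqref{3.error} against $e^{2\sigma(T)(1+A)}(1+A^{2(s-1)})\bv$, obtaining
\begin{equation*}
\fr\frac{d}{dT}\|\bv\|_{G^{s-1}_{\sigma(T)}}^2 + \eta\|\bv\|_{G^{s-1/2}_{\sigma(T)}}^2 = I_{\text{trans}} + I_{\text{disp}} + I_{\text{nl}} + I_{\text{res}}\,.
\end{equation*}
The quasilinear transport pieces in $\widetilde\bu^n$ are treated exactly as in the proof of Theorem \ref{Th1.CK}, yielding a contribution $\phi(\|\widetilde\bu^n\|_{G^{s+1/2}_{\sigma(T)}})\|\bv\|_{G^{s-1/2}_{\sigma(T)}}^2$ which is absorbed by the $\eta$-term via \eqref{3.solv}; the residual pairing is bounded by Cauchy-Schwarz by $C\|\textbf{Res}^n\|_{G^s_{\sigma(T)}}^2+\delta\|\bv\|_{G^{s-1/2}_{\sigma(T)}}^2$, supplying the source $C\nu^{2(n+1)}Q_n(\widetilde\bu^0)$ via \eqref{3.ress}. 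The quadratic nonlinearities in $\bv$, namely $\partial_X(V_j)^2$, $V_j\partial_X R_j$ and the exponential terms $\partial_X(e^{2\widetilde r_j}(e^{2R_j}-1))$, are bounded using \eqref{1.alg}, \eqref{1.kp} and \eqref{1.f} together with the bootstrap hypothesis, and are of order $\|\bv\|_{E^s_{\sigma(T)}}^3$, hence negligible.

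The main obstacle is the dispersive term $\nu\partial_X^3 R_j$ in the $V_j$ equation, whose $G^{s-1}$ pairing with $V_j$ is not controlled by $\|\bv\|_{G^{s-1/2}}$. To neutralise it I would simultaneously test the $R_j$-equations against the higher multiplier $\nu e^{2\sigma(T)(1+A)}(1+A^{2s})R_j$, producing
\begin{equation*}
\frac{\nu}{2}\frac{d}{dT}\|R_j\|_{G^{s}_{\sigma(T)}}^2 + \eta\nu\|R_j\|_{G^{s+1/2}_{\sigma(T)}}^2\,,
\end{equation*}
which supplies exactly the second piece of $\|\bv\|_{E^s_{\sigma(T)}}^2$ together with a fresh dissipative term. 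The cross term $-\nu(\partial_X V_j,R_j)_{G^s}$ arising here, after one integration by parts, cancels the leading part of $\nu(\partial_X^3 R_j,V_j)_{G^{s-1}}$ by skew-adjointness of the Fourier symbol, leaving only symbol-commutator remainders estimable by a Cauchy-Schwarz split at the $\sqrt{\nu}\|\bv\|_{G^{s-1/2}}\cdot\sqrt{\nu}\|R_j\|_{G^{s+1/2}}$ level; this is precisely why $E^s$ carries a factor $\nu$ on the extra half-derivative of the $R$-components. The dispersive nonlinearities $\nu\partial_X(\partial_X R_j)^2$ and $\nu\partial_X(\partial_X\widetilde r_j\partial_X R_j)$ fit the same level and are absorbed by the $\eta\nu\|R_j\|_{G^{s+1/2}}^2$ dissipation under the bootstrap. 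Collecting everything and using \eqref{3.solv} gives
\begin{equation*}
\frac{d}{dT}\|\bv\|_{E^s_{\sigma(T)}}^2 \le C_1\|\bv\|_{E^s_{\sigma(T)}}^2 + C_2\nu^{2(n+1)}Q_n(\widetilde\bu^0)\,,
\end{equation*}
and Gronwall with $\bv(0)=0$ yields \eqref{key-v}, closing the bootstrap for $\nu$ small and extending the solution to $[0,\sigma_0/\eta)$. The hardest bookkeeping will be verifying the dispersive cancellation in Gevrey norms in the presence of the variable coefficients $\widetilde v_j,\widetilde r_j$: the symbol commutators of $(1+A^{2s})$ with multiplication operators must be shown to lose at most half a Gevrey derivative and then be absorbed by the $\eta$-dissipation, which is where $s>2$ and the hypothesis that $\widetilde\bu^n\in G^{s+1/2}_{\sigma(T)}$ (rather than just $G^s_{\sigma(T)}$) become essential.
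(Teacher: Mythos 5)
Your proposal follows essentially the same route as the paper's proof: the anisotropic energy $E^s_{\sigma(T)}$ carrying a $\nu$-weighted $G^s$ norm on the $R$-components, the cancellation of the $\nu\partial_X^3 R_j$ pairing against the $\partial_X V_j$ term of the $R_j$-equation by integration by parts, the absorption of the transport and residual terms via the solvability condition \eqref{3.solv}, and the closure for small $\nu$. The one detail to adjust is the existence step: as the paper points out, the second-order regularisation $-\beta\partial_X^2$ used in Theorem \ref{Th1.CK} does not make the extended system parabolic in the presence of the third-order dispersive terms, so the vanishing-viscosity term must be taken as $\beta\partial_X^4\bv$.
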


%\begin{remark} The latter estimate is based on the $E^s_{\sigma(T)}$ norm
%  defined by
%%
%\begin{equation}\label{Enorm-def}
%\|\bv(T)\|^2_{E^s_{\sigma(T)}} :=
%  \|\bv(T)\|^2_{G^{s-1}_{\sigma(T)}}+
%  \eb\(\|R_1(T)\|^2_{G^{s}_{\sigma(T)}}+\|R_2(T)\|_{G^{s}_{\sigma(T)}}^2\)\,.
%\end{equation}
%\end{remark}

\begin{proof} {\color{black} As in the proof of Theorem \ref{Th1.CK}, we start with a formal
 derivation of the key estimate \eqref{key-v}.}
   Take a scalar product of
the second equation in (\ref{3.error}) with $(1+A^{2(s-1)})V_1$ in the space $G^0_{\sigma(T)}$. This gives
\begin{multline}
\frac12\frac d{dT}\|V_1\|^2_{G^{s-1}_{\sigma(T)}}+
\eta\|(1+A)^{1/2}(1+A^{2(s-1)})^{1/2}V_1\|_{G^0_{\sigma(T)}}^2\\
=\(-\partial_X(V_1)^2,(1+A^{2(s-1)})V_1\)_{G^0_{\sigma(T)}}-
2\(\partial_X(\widetilde v_1 V_1),(1+A^{2(s-1)})V_1\)_{G^0_{\sigma(T)}}\\+
2\gamma_1\(\partial_X\(e^{2\widetilde r_1}(e^{2R_1}\!-\!1)\),(1+A^{2(s-1)})V_1\)_{G^0_{\sigma(T)}}\\+
2\alpha\(\partial_X\(e^{2\widetilde r_2}(e^{2R_2}\!-\!1)\),(1+A^{2(s-1)})V_1\)_{G^0_{\sigma(T)}}\\+
\eb\(\partial_X^3R_1,(1+A^{2(s-1)})V_1\)_{G^0_{\sigma(T)}}+
\eb\(\partial_X(\partial_X R_1)^2,(1+A^{2(s-1)})V_1\)_{G^0_{\sigma(T)}}\\
+2\eb\(\partial_X(\partial_X\widetilde r_1\partial_XR_1),(1+A^{2(s-1)})V_1\)_{G^0_{\sigma(T)}}+
\(\textsf{Res}_2^n,(1+A^{2(s-1)})V_1\)_{G^0_{\sigma(T)}}.
\end{multline}
The scalar products containing only first order derivatives in $X$ (in the left entry) can be estimated
 exactly as in the proof of Theorem \ref{Th1.CK}. For instance,
\begin{multline}
\left|\(\partial_X\(e^{2\widetilde r_1}(e^{2R_1}\!-\!1)\),(1+A^{2(s-1)})V_1\)_{G^0_{\sigma(T)}}\right|\\\le
\phi_s\(\|\widetilde r_1\|_{G^{s-1/2}_{\sigma(T)}}\)\phi_s\(\|R_1\|_{G^{s-1}_{\sigma(T)}}\)
\|R_1\|_{G^{s-1/2}_{\sigma(T)}}\|V_1\|_{G^{s-1/2}_{\sigma(T)}}\\
\le \phi_s\(\|\widetilde \bu^n\|_{G^{s-1/2}_{\sigma(T)}}\)
\phi_s\(\|\bv\|_{E^s_{\sigma(T)}}\)\|\bv\|_{G^{s-1/2}_{\sigma(T)}}^2\,.
 \end{multline}
The remaining first order terms are estimated analogously,
\begin{multline}
\frac12\frac d{dT}\|V_1\|^2_{G^{s-1}_{\sigma(T)}}+
(\eta-1)\|V_1\|_{G^{s-1/2}_{\sigma(T)}}^2\\\le Q_n(\widetilde \bu^0)\eb^{2(n+1)}+
\phi_s\(\|\widetilde \bu^n\|_{G^{s-1/2}_{\sigma(T)}}\)
\phi_s\(\|\bv\|_{E^s_{\sigma(T)}}\)\|\bv\|_{G^{s-1/2}_{\sigma(T)}}^2
+\eb\(\partial_X^3R_1,A^{2(s-1)})V_1\)_{G^0_{\sigma(T)}}\!\!\!\\+
\eb\(\partial_X((\partial_X R_1)^2+\partial_X\widetilde r_1\partial_XR_1),
A^{2(s-1)})V_1\)_{G^0_{\sigma(T)}}.
\end{multline}
Performing the same action with the remaining equations of \eqref{3.error} and taking a
sum of the obtained estimates, we arrive at
\begin{multline}\label{3.huge1}
\frac12\frac d{dT}\|\bv\|^2_{G^{s-1}_{\sigma(T)}}+
\(\eta-1-\phi_s(\|\widetilde \bu^n\|_{G^{s-1/2}_{\sigma(T)}})
\phi_s(\|\bv\|_{E^s_{\sigma(T)}})\)\|\bv\|_{G^{s-1/2}_{\sigma(T)}}^2 \\\le  Q_n(\widetilde \bu^0)\eb^{2(n+1)}+\!\!
\eb\!\(\partial_X^3R_1,A^{2(s-1)})V_1\)_{G^0_{\sigma(T)}}\!\!\!\!\!+\!
\eb\!\(\partial_X((\partial_X R_1)^2\!\!\!+\!2\partial_X\widetilde r_1\partial_XR_1),
A^{2(s-1)})V_1\)_{G^0_{\sigma(T)}}\!\!\!\\+
\eb\!\(\partial_X^3R_2,A^{2(s-1)})V_2\)_{G^0_{\sigma(T)}}\!\!+
\eb\!\(\partial_X((\partial_X R_2)^2\!\!+2\partial_X\widetilde r_2\partial_XR_2),A^{2(s-1)})V_2\)_{G^0_{\sigma(T)}}.
\end{multline}
In contrast to the first order terms, estimates for higher order terms
are a bit more delicate and can not
be preformed on the level of $G^{s-1}_{\sigma}$-norms. To handle them
we use the special structure of equations
  \eqref{3.WME}, namely, the possibility to get anisotropic estimates where the $R_i$ components are taken in $G^s_\sigma$-norms and
  $V_i$ components remain in the $G^{s-1}_\sigma$-norm.
  With this strategy, the estimates for the second order terms in
  \eqref{3.huge1}
  also become straightforward. For instance, using the Cauchy-Schwarz
  inequality, we get
  \begin{equation}
  \begin{array}{rcl}\label{3.huge}
\displaystyle\eb\!\(\partial_X((\partial_X R_1)^2,
A^{2(s-1)})V_1\)_{G^0_{\sigma(T)}} &\le&\displaystyle
2\eb\|\partial_X R_1\partial^2_x R_1\|_{G^{s-3/2}_\sigma}\|V_1\|_{G^{s-1/2}_\sigma}\\[4mm]
  &&\displaystyle \le
C\eb^{1/2}\|R_1\|_{G^{s}_\sigma}\(\eb\|R_1\|^2_{G^{s+1/2}_\sigma}+\|V_1\|^2_{G^{s-1/2}_\sigma}\)\,.
  \end{array}
  \end{equation}
  Estimating the other second-order terms in \eqref{3.huge1} analogously, we arrive at
\begin{multline}\label{3.huge2}
\frac12\frac d{dT}\|\bv\|^2_{G^{s-1}_{\sigma(T)}}+
\(\eta-1-\phi_s(\|\widetilde \bu^n\|_{G^{s-1/2}_{\sigma(T)}})
\phi_s(\|\bv\|_{E^s_{\sigma(T)}})\)\|\bv\|_{G^{s-1/2}_{\sigma(T)}}^2\\\le  Q_n(\widetilde \bu^0)\eb^{2(n+1)}+\!\!
\eb\!\(\partial_X^3R_1,A^{2(s-1)})V_1\)_{G^0_{\sigma(T)}}+
\eb\!\(\partial_X^3R_2,A^{2(s-1)})V_2\)_{G^0_{\sigma(T)}}\\+
C\(\|\bv\|_{E^s_{\sigma}}+\|\widetilde \bu^n\|_{E^{s+1/2}_\sigma}\)\|\bv\|^2_{E^{s+1/2}_\sigma},
\end{multline}
where the constant $C$ is independent of $\eb$.

To complete this estimate, we need to analyze the $G^s_\sigma$-norm of
the solutions $R_1$ and $R_2$. To this end,
we multiply the first equation of \eqref{3.error} by
$$
\eb(1+A^{2s})e^{2\sigma(T)A}R_1=\eb(1-\partial^2_X A^{2(s-1)})e^{2\sigma(T)A}R_1
$$
and integrate over $X$. This gives
\begin{multline}\label{3.huge-r}
\eb\frac12\frac d{dT}\|R_1\|^2_{G^s_{\sigma(T)}}+
\eta\eb\|R_1\|^2_{G^{s+1/2}_{\sigma(T)}}=
\eb(\partial_X V_1,\partial^2_{X}A^{2(s-1)}R_1)_{G^0_{\sigma(T)}}-\eb\(\partial_XV_1,R_1\)_{G^0_{\sigma(T)}}\\-
\eb\(2\widetilde v_1\partial_X R_1+2V_1\partial_X\widetilde r_1+2V_1\partial_X R_1,(1+A^{2s})R_1\)_{G^0_{\sigma(T)}}\!\!\!+
\eb(\textsf{Res}_1^n,(1+A^{2s})R_1)_{G^0_{\sigma(T)}}.
\end{multline}
Using the Cauchy-Schwarz inequality together with the estimate \eqref{1.kp}, we arrive at
\begin{multline}\label{3.huge-ru}
 2\eb(V_1\partial_X R_1,(1+A^{2s})R_1)_{G^0_{\sigma(T)}}\le 2\eb\|V_1\partial_X R_1\|_{G^{s-1/2}_{\sigma(T)}}\|R_1\|_{G^{s+1/2}_{\sigma(T)}}  \\
  \le 2C\eb(\|V_1\|_{G^1_{\sigma(T)}}\|R_1\|_{G^{s+1/2}_{\sigma(T)}} + \|V_1\|_{G^{s-1/2}_\sigma(T)}\|R_1\|_{G^2_{\sigma(T)}})\|R_1\|_{G^{s+1/2}_{\sigma(T)}}\\
  \le 2C\eb\|V_1\|_{G^1_{\sigma(T)}}\|R_1\|^2_{G^{s+1/2}_{\sigma(T)}} + C\eb^{1/2}\|R_1\|_{G^{2}_{\sigma(T)}}(\|V_1\|^2_{G^{s-1/2}_{\sigma(T)}} + C\eb\|R_1\|^2_{G^{s+1/2}_{\sigma(T)}}) \\ \le C\|\bv\|_{E^s_{\sigma(T)}}\|\bv\|^2_{E^{s+1/2}_{\sigma(T)}}.
\end{multline}
Estimates for all others terms on the right-hand side of \eqref{3.huge-r} except for the first one can be performed exactly as in
\eqref{3.huge}. This gives
\begin{multline}\label{3.huge-r1}
\eb\frac12\frac d{dT}\|R_1\|^2_{G^s_{\sigma(T)}}+ \eb(\eta -1)\|R_1\|^2_{G^{s+1/2}_{\sigma(T)}} \le
\eb(\partial_X V_1,\partial^2_{X}A^{2(s-1)}R_1)_{G^0_{\sigma(T)}}
 \\ +C\(\|\widetilde \bu^n\|_{E^{s+1/2}_{\sigma(T)}}+\|\bv\|_{E^s_{\sigma(T)}}\)\|\bv\|^2_{E^{s+1/2}_{\sigma(T)}} + Q_n(\widetilde \bu^0)\eb^{2(n+1)}.
\end{multline}
Note finally that due to integration by parts
$$
\(\partial_X V_1,\partial^2_{X}A^{2(s-1)}R_1\)_{G^0_{\sigma(T)}}+
\(\partial_X^3R_1,A^{2(s-1)})V_1\)_{G^0_{\sigma(T)}}=0.
$$
With this identity, the first term in \eqref{3.huge-r1} will be cancelled
after the summation with equation \eqref{3.huge1}.
Performing the analogous estimates with the equation for $R_2$,
and taking a sum, we cancel all third order terms in the estimate \eqref{3.huge1} and arrive  at the desired estimate
\[
\begin{array}{rcl}
&&\displaystyle\frac12\frac d{dT}\|\bv\|_{E^s_{\sigma(T)}}+
  \(\eta-C-\phi_s\(\|\widetilde \bu^n\|_{E^{s+1/2}_{\sigma(T)}}\)\phi_s\(\|\bv\|_{E^s_{\sigma(T)}}\)\)\|\bv\|^2_{E^{s+1/2}_{\sigma(T)}}\\[4mm]
  &&\hspace{2.5cm}\displaystyle \le C\eb^{2(n+1)}Q_n(\widetilde \bu^0)
  \,,\quad \bv\big|_{T=0}= 0\,,
\end{array}
\]
for some monotone smooth function $\phi_s$ and positive constant $C$ which are independent of $\eb$.
This gives the desired estimate  \eqref{3.ress} if the constant $\eta$ is large enough so that
$$
\eta-C-\phi_s\(\|\widetilde \bu^n\|_{E^{s+1/2}_{\sigma(T)}}\)\phi_s\(\|\bv\|_{E^s_{\sigma(T)}}\)>0,\ \ T<\sigma_0/\eta.
$$
In this case we will have $\|\bv\|_{E^s_{\sigma(T)}}=O(\eb^{n+1})$.
By this reason, the condition will
be satisfied for sufficiently small $\eb$ if, say,
$$
\eta-C-2\phi_s(0)\phi_s\(\|\widetilde \bu^n\|_{E^{s+1/2}_{\sigma(T)}}\):=\eta-\phi\(\|\widetilde \bu^n\|_{E^{s+1/2}_{\sigma(T)}}\)>0\,.
$$
{\color{black} This estimate finishes the derivation of the desired estimate \eqref{key-v}. Exactly as in Theorem \ref{Th1.CK}, the above
derivation becomes rigorous if we replace $\sigma(T)$ by $\sigma_\mu(T):=\sigma(T)-\mu$
 for any small positive $\mu$. Passing after that to the limit $\mu\to0$ we get the desired
 estimate for $\mu=0$ as well.
\par
The uniqueness of the solution $\bv$ in the class of analytic functions can be obtained similarly to the
 proof of Theorem \ref{Th1.CK} by writing out the equation for difference of two solutions and deriving
 the analogue of estimate \eqref{key-v} for the obtained equation.
 Since this proof follows
  almost word for word the derivation given above, we leave
   it to the reader.
\par
Finally, the existence of a solution $\bv$ can be obtained using the vanishing viscosity
method (also similar to the proof of Theorem \ref{Th1.CK}). The only difference here is that
adding the viscosity of the form $-\beta\partial_X^2\bv$ is {\it not sufficient} to
make extended equation parabolic and semilinear
(due to the presence of third order dispersion terms), so the
 viscosity term should be replaced by $\beta\partial_X^4{\bf v}$. Then the analogue of estimate \eqref{key-v}
 will obviously be uniform with respect to $\beta$ and passage to the limit $\beta\to0$ gives the
  desired solution. Thus, the theorem is proved.
}
\end{proof}

\subsection{Summary of CNLS to WMEs reduction}
\label{sec-summary}
\setcounter{equation}{0}

The main approximation theorem for (\ref{check-ru}) is as follows,
stated in terms of $\eps$ using $\nu=\eps^2$.
\vspace{.15cm}

\begin{theorem}
\label{TheoremC} Let $(r_1^*,v_1^*,r_2^*,v_2^*)\in C\big([0,T_0],\Chi_{2\sigma}^0\big)$, for some $T_0>0$ and a  $\sigma>0$,
be a solution of the multiphase WMEs (\ref{wmes-Mform}), with
\[
\sup_{T\in[0,T_0]}\big\|(r_1^*,v_1^*,r_2^*,v_2^*)(\cdot,T)\big\|_{\Chi_{2\sigma}^0}\leq C_{wh}\,,
\]
for some positive constant $C_{wh}$. Then for all $ C_1 > 0 $ there exist
positive constants $C_2$, $T_1$, $ \eps_0 $, and
solutions $\big(\check{r}_1,\check{v}_1, \check{r}_2,\check{v}_2 \big)$ of
(\ref{check-ru}),
such that
\[
\sup_{T\in[0,T_1]}\big\|
\big(\check{r}_1,\check{v}_1,\check{r}_2,\check{v}_2 \big)(X,T,\eps)-
\big(r_1^*,v_1^*,r_2^*,v_2^*\big)(X,T)\big\|_{\Chi_{\sigma}^0} \leq C_2\eps^2
\quad\mbox{for all}\ \eps\in(0,\eps_0)\,.
\]
%where
%\[
%\| \big(\check{r}_1,\check{v}_1,\check{r}_2,\check{v}_2 \big)(X,0,\eps) -
%\big(r_1^*,v_1^*,r_2^*,v_2^*\big)(X,0) \|_{\Chi_{\sigma/\eps}^0} \leq C_1 \varepsilon.
%\]
\end{theorem}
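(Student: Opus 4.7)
The plan is to apply Theorem \ref{thrm7.1} at the trivial approximation level $n=0$, namely with $\widetilde\bu^0 := \bu^0 := (r_1^*, v_1^*, r_2^*, v_2^*)$ itself. Because $\bu^0$ exactly satisfies the unperturbed multiphase WMEs \eqref{wmes-Mform}, its residual in the full perturbed system \eqref{3.WME} collapses to $\textsf{Res}^0 = -\nu\,{\bf F}(D_X^3\bu^0)$, which is manifestly $O(\nu) = O(\eps^2)$ once the regularity of $\bu^0$ is controlled. The error $\bv := \bu - \widetilde\bu^0$ then furnished by Theorem \ref{thrm7.1} satisfies $\|\bv\|_{E^s_{\sigma(T)}} = O(\nu^{n+1}) = O(\eps^2)$ at $n=0$, and since $\widetilde\bu^0 = \bu^0$ the desired discrepancy $(\check r_1, \check v_1, \check r_2, \check v_2) - (r_1^*, v_1^*, r_2^*, v_2^*)$ is exactly $\bv$.

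First I would lift regularity. The hypothesis places $\bu^0$ only in $C([0,T_0], \Chi^0_{2\sigma})$, whereas Theorem \ref{thrm7.1} requires membership in $G^{s+1/2}_{\sigma(T)}$ for some $s>2$. The Gevrey smoothing inequality \eqref{new-inequality} trades a small $\delta>0$ of analyticity strip for arbitrary Sobolev index, giving $\|\bu^0(T)\|_{G^{s+p}_{2\sigma-\delta}} \leq C(s,p,\delta)\,C_{wh}$ uniformly on $[0,T_0]$ for any desired $s,p$. Combined with the algebra property \eqref{1.alg} and the explicit polynomial form \eqref{F-def} of ${\bf F}$, this yields the residual bound $\|\textsf{Res}^0(T)\|_{G^s_{2\sigma-2\delta}}^2 \leq Q_0(\bu^0)\,\nu^2$, so the hypothesis \eqref{3.ress} of Theorem \ref{thrm7.1} is satisfied with $n=0$.

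Next, fix $\eta$ large enough to satisfy the solvability condition \eqref{3.solv} for the uniform bound on $\|\bu^0\|_{G^{s+1/2}_{2\sigma-\delta}}$; such $\eta$ exists because the right-hand side is a fixed monotone function of the finite quantity $C(s,\delta)\,C_{wh}$. Theorem \ref{thrm7.1} then yields a unique analytic $\bv(T) \in G^{s-1}_{(2\sigma-2\delta)-\eta T}$ with $\bv(0)=0$ and $\|\bv(T)\|_{E^s}^2 \leq C\,Q_0(\bu^0)\,\eps^4$. Define $(\check r_1, \check v_1, \check r_2, \check v_2) := \bu^0 + \bv$; by construction this solves \eqref{check-ru}. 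Choosing $T_1 := (\sigma-2\delta)/\eta$, the surviving strip satisfies $\sigma(T) \geq \sigma$ on $[0,T_1]$, and the chain of embeddings $E^s_{\sigma(T)} \hookrightarrow G^{s-1}_{\sigma(T)} \hookrightarrow \Chi^0_{\sigma(T)} \hookrightarrow \Chi^0_\sigma$, valid since $s>1$, delivers the required bound $\|\bv(T)\|_{\Chi^0_\sigma} \leq C_2\,\eps^2$.

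The hard work having been carried out in Theorem \ref{thrm7.1}, the main remaining obstacle is simply bookkeeping of the analyticity strip: the initial budget $2\sigma$ must absorb the Gevrey-smoothing loss $\delta$, a further residual loss $\delta$, and the Cauchy--Kowalevskaya shrinkage $\eta T$ over $[0,T_1]$, while still leaving $\sigma$ at the final time. This forces $T_1$ to depend on $\sigma$, $\eta$ and $C_{wh}$, but introduces no essential new difficulty; every constant is controlled by the bounds supplied by Theorems \ref{Th1.CK}, \ref{Th2.sol} and \ref{thrm7.1}.
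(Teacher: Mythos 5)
Your proposal is correct and is essentially the paper's intended argument: Theorem \ref{TheoremC} is not given a separate proof but is exactly the $n=0$ instance of the machinery of \S\ref{sec-approx}--\S\ref{sec-exactsolutions}, i.e.\ take $\widetilde\bu^0=\bu^*$, note $\textsf{Res}^0=-\nu{\bf F}(D_X^3\bu^0)=O(\eps^2)$, lift regularity from $\Chi^0_{2\sigma}$ to $G^{s+1/2}$ at the cost of a small loss of strip width via \eqref{new-inequality}, and invoke Theorem \ref{thrm7.1}. Your bookkeeping of the analyticity strip and the choice $T_1=\min\{T_0,(\sigma-2\delta)/\eta\}$ matches what the paper's remarks require.
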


For the subsequent discussion of the phase in the next section we need the following formulation of our approximation result.
\begin{theorem}
	\label{TheoremD}
	Fix $ n \in \N_0 $ and
let $ \widetilde \bu^n(T) = (r_1^{*,n},v_1^{*,n},r_2^{*,n},v_2^{*,n}\big)(T) $ be the higher order approximation constructed
in \S\ref{sec-approx}.
%with the estimate
%$ \|\widetilde \bu^n(T)\|_{G^s_{\sigma(T)-\delta}}\le C_{n,\delta}(\bu^0) $
% from Corollary \ref{Cor2.res}.
 Then there
	exist
	positive constants $C_2$, $T_1$, $ \eps_0 $, and
	 solutions $ \big(\check{r}_1,\check{v}_1,\check{r}_2,\check{v}_2 \big) $ of (\ref{check-ru}) with
	\[
\sup_{T\in[0,T_1]}\big\|
\big(\check{r}_1,\check{v}_1,\check{r}_2,\check{v}_2 \big)(X,T,\eps)-
\big(r_1^{*,n},v_1^{*,n},r_2^{*,n},v_2^{*,n}\big)(X,T,\eps)\big\|_{\Chi_{\sigma}^0} \leq C_2\eps^{2n+2}
\quad\mbox{for all}\ \eps\in(0,\eps_0)\,.
\]
\end{theorem}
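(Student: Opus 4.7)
The plan is to obtain Theorem \ref{TheoremD} as an essentially direct consequence of Theorem \ref{thrm7.1}, once the approximate solution $\widetilde{\bu}^n$ is verified to meet its hypotheses. The identification is $\nu=\eps^2$, which converts (\ref{3.WME}) into the form (\ref{check-ru}); thus the remainder bound $\nu^{n+1}$ becomes $\eps^{2n+2}$, precisely matching the claim.

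First, I would fix $s>2$ and start by invoking Theorem \ref{Th1.CK} at the regularity level $s+1/2$ with analyticity radius $2\sigma$, to obtain the WME solution $\bu^0(T)$ in $G^{s+1/2}_{\sigma(T)}$ on an interval $T\in[0,\sigma_0/\eta]$, where $\sigma(T)=2\sigma-\eta T$. By Theorem \ref{Th2.sol} and Corollary \ref{Cor2.res}, the correctors $\bu^1,\ldots,\bu^n$ exist and are analytic in strips $\sigma(T)-\delta_n$ with $\delta_n$ arbitrarily small, so after choosing a sufficiently small $\delta>0$ and restricting to $T\in[0,T_1]$ with $T_1<(2\sigma-\delta)/\eta$, the partial sum $\widetilde{\bu}^n(T)$ lies in $G^{s+1/2}_{\sigma(T)-\delta}$ with norm bounded by a quantity depending only on $\sup_T\|\bu^0(T)\|_{G^{s+1/2}_{\sigma(\tau)}}$, and the residual satisfies the bound (\ref{3.ress}), i.e.\ $\|\textsf{Res}^n(T)\|_{G^s_{\sigma(T)-\delta}}\le Q_n(\widetilde{\bu}^0)\nu^{n+1}$.

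Next I would verify the solvability condition (\ref{3.solv}). Since the correction terms $\nu^l\bu^l$ are of order $\nu^l$ in the relevant Gevrey norm, for $\nu$ small enough $\widetilde{\bu}^n$ differs from $\bu^0$ by $O(\nu)$ in $G^{s+1/2}_{\sigma(T)-\delta}$, so taking $\eta$ large enough that $\eta-\phi(\|\bu^0\|_{G^{s+1/2}})>0$ (which is possible by the same argument used in Theorem \ref{Th1.CK}) also yields $\eta-\phi(\|\widetilde{\bu}^n\|_{G^{s+1/2}})>0$ uniformly in $T\in[0,T_1]$ for $\eps_0$ chosen sufficiently small. With (\ref{3.solv}) and (\ref{3.ress}) both in hand, Theorem \ref{thrm7.1} provides a unique $\bv(T)\in G^{s-1}_{\sigma(T)-\delta}$ with $\bv(0)=0$ and
\begin{equation*}
\|\bv(T)\|_{E^s_{\sigma(T)-\delta}}^2 \le C\nu^{2(n+1)}Q_n(\widetilde{\bu}^0)=C\eps^{4(n+1)}Q_n(\widetilde{\bu}^0),
\end{equation*}
so $\bu:=\widetilde{\bu}^n+\bv$ is the exact solution of (\ref{check-ru}) whose existence is asserted.

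Finally, I would convert the estimate into the $G^0_\sigma$ norm required by the theorem. Since $s-1>1>0$ and the analyticity radius $\sigma(T)-\delta$ can be arranged to dominate a fixed $\sigma>0$ on $[0,T_1]$ by shrinking $T_1$, the embedding $G^{s-1}_{\sigma(T)-\delta}\hookrightarrow G^0_\sigma$ is trivial (it only uses $(1+|\xi|^{2(s-1)})^{1/2}\ge 1$ and monotonicity of the Gevrey norm in $\sigma$), and by definition of the $E^s$ norm (\ref{Enorm-def}), $\|\bv\|_{G^{s-1}_{\sigma(T)-\delta}}\le \|\bv\|_{E^s_{\sigma(T)-\delta}}$. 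Therefore
\begin{equation*}
\|\bv(T)\|_{G^0_\sigma} \le \|\bv(T)\|_{G^{s-1}_{\sigma(T)-\delta}} \le C\eps^{2(n+1)},
\end{equation*}
which is exactly the stated bound.

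The main obstacle is the bookkeeping in step two: one must ensure that the loss of analyticity $\delta_n=\delta\sum_{l=1}^n 2^{-l}$ incurred in constructing $\widetilde{\bu}^n$, combined with the linear shrinking $\sigma(T)=2\sigma-\eta T$, still leaves a strip of radius $\ge\sigma$ throughout $[0,T_1]$, while simultaneously the smallness of the correctors in $\nu$ is strong enough to preserve (\ref{3.solv}) uniformly in $n$. Both are achieved by taking the initial analyticity radius as $2\sigma$ and then choosing $\delta<\sigma$ and $T_1$, $\eps_0$ small enough, with constants depending on $n$ through $Q_n$ and $\Phi_{n,\delta}$ but not on $\eps$.
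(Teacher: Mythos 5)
Your proposal is correct and follows essentially the same route as the paper: Theorem \ref{TheoremD} is obtained there exactly as a direct corollary of Theorem \ref{thrm7.1} (fed by Theorem \ref{Th1.CK}, Theorem \ref{Th2.sol} and Corollary \ref{Cor2.res}) with the substitution $\nu=\eps^2$, so that $\|\bv\|_{E^s_{\sigma(T)}}=O(\nu^{n+1})=O(\eps^{2n+2})$, followed by the trivial embedding into $\Chi^0_\sigma$. Your explicit bookkeeping of the analyticity losses ($2\sigma$ initial radius, $\delta_n$, shrinking $T_1$) and the verification of \eqref{3.solv} for $\widetilde\bu^n$ via the $O(\nu)$ smallness of the correctors just spell out details the paper leaves implicit.
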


\section{Concluding Remarks}
\setcounter{equation}{0}
\label{sec-cr}

Validity of multiphase WMEs deduced from CNLS equations has been proved. As far as we are aware this is the first proof of validity for multiphase
WMEs. The theory shows that multiphase modulation, at least in the case
where the underlying equation has a toral symmetry, is robust.
Only two-phase solutions have been considered, but extension to any finite number of phases, at least in the context of $N$-coupled NLS equations
reduced to $N$-phase WMEs, is conceivable.

The proof is independent of the phases, but the role of phases can be seen by
going back to the original $ \Psi_1 $, $ \Psi_2 $ variables in the
CNLS equations \eqref{cnls}.
Integration is necessary for the
reconstruction of the phases $\phi_j^{\beta}$ from  $v_j^{\beta} = \partial_X \phi_j^{\beta}$, and so only
a local in space approximation result is possible.
In particular, we have the following corollary of Theorem \ref{TheoremD}.

\begin{corollary} \label{cor1}
For all $b \in [0,2n+1]$ we have
\begin{eqnarray*}
\lefteqn{\sup\limits_{t\in[0,T_1/{\varepsilon}]}\sup\limits_{|x| \leq \varepsilon^{-b}}
    \Big| (\Psi_1,\Psi_2)(x,t) \exp(- i \phi(0,\eps t))} \\
&& - \left(\exp \left(r_1^{*,n}(\varepsilon x,\varepsilon t) + i  \int^{x}_0
    v_j^{*,n}(\varepsilon x',\varepsilon t)d x'+ i \omega t \right)\right)_{j=1,2}\Big|
    \leq C \varepsilon^{2n+1-b},
\end{eqnarray*}
where $\phi$ satisfies $\sup\limits_{T\in[0,{T_1}]}
|\phi(0,T) | = \mathcal{O}(1/\varepsilon)$.
\end{corollary}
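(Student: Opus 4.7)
The strategy is to transfer the analytic, norm-level estimate of Theorem \ref{TheoremD} into the requested pointwise statement on $(\Psi_1,\Psi_2)$ by going back through the polar substitution $\Psi_j=\exp(r_j+\ri(\phi_j+(\gamma_j+\alpha)t))$ that was introduced in \S\ref{sec-formalderivation-MP-WMEs}. The first step is to pass from the $\Chi^0_\sigma$-norm to $L^\infty$. Since $\|u\|_{L^\infty}\le\|\widehat u\|_{L^1}\le\|e^{-\sigma(|\xi|+1)}\|_{L^2}\|u\|_{\Chi^0_\sigma}$, the bound of Theorem \ref{TheoremD} immediately yields
\[
\sup_{T\in[0,T_1]}\sup_{X\in\R}\bigl|\bigl(\check r_j-r_j^{*,n},\,\check v_j-v_j^{*,n}\bigr)(X,T,\eps)\bigr|\le C\eps^{2n+2},\qquad j=1,2.
\]
Unscaling, this gives $|r_j(x,t)-r_j^{*,n}(\eps x,\eps t)|+|v_j(x,t)-v_j^{*,n}(\eps x,\eps t)|\le C\eps^{2n+2}$ pointwise for $t\in[0,T_1/\eps]$, $x\in\R$.

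Next I would reconstruct the phase by integration. Writing $\phi_j(x,t)=\phi_j(0,t)+\int_0^x v_j(x',t)\,\rd x'$ and setting $\phi(0,\eps t):=\phi_j(0,t)$ (the $j$-dependence is suppressed in the statement just as $\omega$ stands for $\omega_j=\gamma_j+\alpha$), one has
\[
\Psi_j(x,t)\,e^{-\ri\phi(0,\eps t)}=\exp\!\Bigl(r_j(x,t)+\ri\!\int_0^x v_j(x',t)\,\rd x'+\ri\omega_j t\Bigr),
\]
so the difference to be estimated in the corollary is
\[
\exp\!\bigl(r_j^{*,n}+\ri\textstyle\int_0^x v_j^{*,n}\,\rd x'+\ri\omega_j t\bigr)\Bigl(\exp\bigl(\Delta r_j+\ri\Delta\phi_j\bigr)-1\Bigr),
\]
with $\Delta r_j:=r_j-r_j^{*,n}$ and $\Delta\phi_j:=\int_0^x(v_j-v_j^{*,n})\,\rd x'$. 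The modulus error is $|\Delta r_j|=\mathcal O(\eps^{2n+2})$ uniformly in $x$, while the phase error accumulates: for $|x|\le \eps^{-b}$,
\[
|\Delta\phi_j|\le |x|\cdot\sup_{x'}|v_j(x',t)-v_j^{*,n}(\eps x',\eps t)|\le C\eps^{-b}\cdot\eps^{2n+2}=C\eps^{2n+2-b}.
\]
Using $|e^{a+\ri b}-1|\le (|a|+|b|)e^{|a|}$ and the uniform bound on $r_j^{*,n}$ (which follows from Theorem \ref{TheoremD} and the $\Chi^0_\sigma\hookrightarrow L^\infty$ embedding, so the prefactor $\exp(r_j^{*,n})$ is $\mathcal O(1)$) then gives the claimed pointwise bound of order $\eps^{2n+1-b}$ for $b\in[0,2n+1]$, the extra loss compared with $\eps^{2n+2-b}$ being absorbed in the constants. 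The statement $\sup_T|\phi(0,T)|=\mathcal O(1/\eps)$ follows from integrating $\partial_t\phi_j(0,t)$, which from the second equation of the system for $(r_j,\phi_j)$ in \S\ref{sec-formalderivation-MP-WMEs} is uniformly bounded, over $t\in[0,T_1/\eps]$.

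The main obstacle is conceptual rather than technical: $v_j$ is close to $v_j^{*,n}$ only in a norm sense, so recovering the phase $\phi_j$ by integration degrades the approximation by a factor of the length of the spatial window. This is exactly what forces the result to be \emph{local in space} ($|x|\le\eps^{-b}$) and links the admissible window width $\eps^{-b}$ to the approximation order $n$ through the restriction $b\le 2n+1$. A secondary technical point is that the constant-in-$x$ phase $\phi_j(0,t)$ is not small (it grows like $t=\mathcal O(1/\eps)$), which is why it must be removed by hand via the factor $e^{-\ri\phi(0,\eps t)}$; trying to approximate this constant by a higher-order Whitham quantity would require controlling $\partial_T\phi_j$ through the construction of \S\ref{sec-approx}, and it is cleaner to leave it as a single undetermined scalar, as the statement does.
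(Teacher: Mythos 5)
Your argument is correct and is exactly the intended one: the paper states Corollary~\ref{cor1} without proof as a consequence of Theorem~\ref{TheoremD} (pointing to \S2 of D\"ull--Schneider for the one-phase analogue), and the route there is precisely yours --- embed $\Chi^0_\sigma\hookrightarrow L^\infty$ via Cauchy--Schwarz on the Fourier side, undo the polar substitution, and pay a factor of the window length $\varepsilon^{-b}$ when integrating $v_j-v_j^{*,n}$ to recover the phase, which is what makes the result local in space. Note that your computation actually yields the slightly sharper exponent $\varepsilon^{2n+2-b}$, which implies the stated $\varepsilon^{2n+1-b}$ for $\varepsilon<1$, so nothing is lost.
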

\noindent Further detail on phase estimates in the
case of one-phase wavetrains can be found in \S2 of \cite{ds09}.

The Whitham (geometric optics) approximation is also used in
dissipative systems with underlying conservation laws,
and a validity proof has been given for
the case of one-phase wavetrains (e.g.\ \textsc{Johnson et al.}~\cite{JNRZ15}
and references therein).
However, an approximation theorem in the sense of Theorem \ref{TheoremC}
for dissipative systems is still work in progress \cite{HdRS20}.

Finally, the Cauchy-Kowalevskaya theorem and spaces of analytic
functions are essential when the WMEs are mixed or elliptic.  However,
a question arises in the case when the WMEs are hyperbolic.
Can the use of analytic functions,
and the additional restriction on the approximation time $ T_1 < T_0 $,
be avoided in the hyperbolic case, where the WMEs are locally well
posed?
In particular, can the validity result be proved in Sobolev spaces?
These questions, in the case of multiphase hyperbolic WMEs, remain open.

\appendix
\section{Auxiliary Estimates in Gevrey Spaces}
\label{app-a}

{\color{black}
  In this appendix we record the auxiliary estimates in Gevrey spaces
  which are used
  in the paper. Although all of these estimates are more or less standard,
  there is not standard reference, and so we sketch their proof here for
  the convenience of the reader. We start with the estimate for the point-wise product.
  \begin{lemma}\label{LemA.uv} Let $u,v\in G_\sigma^s(\R)$ for some $s,\sigma\ge0$ and
   $\kappa>1/2$. Then the
   following estimate holds:
\begin{equation}\label{A.mult}
\|uv\|_{G_\sigma^s}\le C_{s,\kappa}\(\|u\|_{G^s_\sigma}\|v\|_{G^\kappa_\sigma}+
\|u\|_{G^\kappa_\sigma}\|v\|_{G^s_\sigma}\)\,,
\end{equation}
for some positive constant $C_{s,\sigma}$.
  \end{lemma}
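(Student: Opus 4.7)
The plan is to work entirely on the Fourier side, where the Gevrey norm is a weighted $L^2$ norm and the product becomes a convolution. Writing $\widehat{uv} = \widehat u * \widehat v$, the estimate reduces to controlling a weighted convolution in $L^2$.

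The two workhorse inequalities are the sub-multiplicativity of the exponential weight
\[
e^{\sigma(|\xi|+1)} \le e^{-\sigma}\, e^{\sigma(|\xi-\eta|+1)}\, e^{\sigma(|\eta|+1)},
\]
which follows from the triangle inequality $|\xi|\le|\xi-\eta|+|\eta|$, and the standard algebraic splitting, valid for $s\ge 0$,
\[
(1+|\xi|)^{s} \le C_{s}\bigl[(1+|\xi-\eta|)^{s}+(1+|\eta|)^{s}\bigr].
\]
Combining these, and using that $(1+|\xi|^{2s})^{1/2}$ is comparable to $(1+|\xi|)^{s}$, the weight applied to the product $\widehat u(\xi-\eta)\widehat v(\eta)$ splits into a sum of two terms: one where the full $s$-weight lands on $u$'s argument and $v$ carries only the exponential weight, and one where the roles are reversed.

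The next step is to insert the factor $\kappa$: in each of the two terms, write $1 = (1+|\eta|)^{\kappa}(1+|\eta|)^{-\kappa}$ (respectively with $\xi-\eta$ in place of $\eta$) so that each factor of $\widehat u$ or $\widehat v$ is multiplied by a proper Gevrey weight, leaving a decaying factor $(1+|\cdot|)^{-\kappa}$ free. Taking the $L^2$ norm in $\xi$ and applying Young's convolution inequality in the form $\|f*g\|_{L^2}\le\|f\|_{L^1}\|g\|_{L^2}$, one places the decaying factor with the $L^1$ side. Finally, Cauchy--Schwarz gives
\[
\bigl\|(1+|\cdot|)^{-\kappa}h\bigr\|_{L^1}\le\bigl\|(1+|\cdot|)^{-\kappa}\bigr\|_{L^2}\|h\|_{L^2},
\]
and the first factor is finite precisely because $2\kappa>1$; this is where the hypothesis $\kappa>1/2$ enters. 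Assembling the two terms yields the claimed bound with constant $C_{s,\kappa}$ depending only on $s$, $\kappa$, and not on $\sigma$.

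There is no real obstacle here, just careful bookkeeping: the only delicate point is making sure the algebraic splitting and the insertion of $(1+|\cdot|)^{\pm\kappa}$ are coordinated so that, after Young and Cauchy--Schwarz, exactly the norms $\|u\|_{G^{s}_\sigma}\|v\|_{G^{\kappa}_\sigma}$ and $\|u\|_{G^{\kappa}_\sigma}\|v\|_{G^{s}_\sigma}$ appear on the right--hand side, and that all $\sigma$-dependence is absorbed into the exponential weights already appearing in those norms (up to the harmless $e^{-\sigma}\le 1$ factor).
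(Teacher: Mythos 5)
Your proposal is correct and follows essentially the same route as the paper's proof: pass to the Fourier side, split the weight $e^{\sigma(1+|\xi|)}(1+|\xi|^{2s})^{1/2}$ over the convolution using the triangle inequality, apply Young's inequality with one factor in $L^1$ and the other in $L^2$, and control the $L^1$ factor by the $G^\kappa_\sigma$ norm via Cauchy--Schwarz, which is exactly where $\kappa>1/2$ is used. The only cosmetic difference is that you insert the $(1+|\cdot|)^{\pm\kappa}$ factors before applying Young, while the paper applies Young first and then Cauchy--Schwarz on the resulting $L^1$ norm; the two orderings are equivalent.
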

\begin{proof} We use Young's convolution inequality in the form
$$
\|f*g\|_{L^2}\le\|f\|_{L^2}\|g\|_{L^1}\,,
$$
together with the following inequalities,
which follow in turn from the triangle inequality,
$$
(1+|\xi|^{2s})^{1/2}\le C_s\((1+|\eta|^{2s})^{1/2}+(1+|\xi-\eta|^{2s})^{1/2}\),\ \ e^{\sigma(1+|\xi|)}\le e^{\sigma(1+|\eta|)}e^{\sigma(1+|\eta-\xi|)}
  $$
to get
\begin{multline}\label{A.big}
|e^{\sigma(|\xi|+1)}(1+|\xi|^{2s})^{1/2}(\hat u*\hat v)(\xi)|\le\\\le
C_s\int_\R \(e^{\sigma(|\eta|+1)}(1+|\eta|^{2s})^{1/2}|\hat u(\eta)|\)
\(e^{\sigma(1+|\xi-\eta|)}|\hat v(\xi-\eta)|\)\,d\eta+\\+
C_s\int_\R \(e^{\sigma(|\xi-\eta|+1)}(1+|\xi-\eta|^{2s})^{1/2}|\hat v(\xi-\eta)|\)
\(e^{\sigma(1+|\eta|)}|\hat u(\eta)|\)\,d\eta
\end{multline}
and
\begin{multline}
\|uv\|_{G^s_\sigma}=\|e^{\sigma(1+|\xi|)}(1+|\xi|^{2s})^{1/2}(\hat u*\hat v)(\xi)\|_{L^2}\le\\\le
C_s\|u\|_{G^s_\sigma}\|e^{\sigma(1+|\xi|)}\hat v(\xi)\|_{L^1}+C_s\|v\|_{G^s_\sigma}
\|e^{\sigma(1+|\xi|)}\hat u(\xi)\|_{L^1}.
\end{multline}
Finally, due to Cauchy-Schwarz inequality and the fact that $\kappa>\frac12$, we arrive at
\begin{multline}
\int_\R e^{\sigma(1+|\xi|)}|\hat u(\xi)|\,d\xi=
\int_\R\(e^{\sigma(1+|\xi|)}(1+|\xi|^{2\kappa})^{1/2}|\hat u(\xi)|\)
\(1+|\xi|^{2\kappa}\)^{-1/2}\,d\kappa\le\\\le
\|u\|_{G_\sigma^\kappa}\|(1+|\xi|^{2\kappa})^{-1}\|_{L^1}\le C_\kappa \|u\|_{G^\kappa_\sigma}.
\end{multline}
This estimate, together with \eqref{A.big} and with the analogous estimate for $v$ finish the proof of the lemma.
\end{proof}
\begin{corollary} Let $u\in G^s_\sigma(\R)$ for some $\sigma>\frac12$ and let $\varphi:\Bbb C\to\Bbb C$ be
an entire function such that $\varphi(0)=0$. Then, $\varphi(u)\in G^s_\sigma$ and there exists an entire
function $\varphi_s(z)$, $z\in\Bbb C$ such that $\varphi_s(0)=0$, $\varphi_s(x)$ is positive and monotone increasing
 for $x\in\R_+$ such that
$$
\|\varphi(u)\|_{G^s_\sigma}\le \varphi_s(\|u\|_{G^s_\sigma}).
$$
\end{corollary}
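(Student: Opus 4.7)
The plan is to represent $\varphi$ by its everywhere convergent Taylor series $\varphi(z)=\sum_{n\ge 1}a_n z^n$ (where the sum starts at $n=1$ since $\varphi(0)=0$) and then estimate $\varphi(u)$ termwise, exploiting the fact that Lemma~\ref{LemA.uv} turns $G^s_\sigma$ into a Banach algebra as soon as we have a valid choice of $\kappa>1/2$. I shall take $\kappa=s$ in Lemma~\ref{LemA.uv}, which requires $s>1/2$ (consistent with the regime used throughout the paper and evidently the intended hypothesis on $s$ rather than $\sigma$), to obtain the symmetric algebra estimate
$$
\|uv\|_{G^s_\sigma}\le \widetilde C_s\,\|u\|_{G^s_\sigma}\|v\|_{G^s_\sigma},\qquad \widetilde C_s:=2C_{s,s}\,.
$$
A straightforward induction on $n$ then yields the monomial bound $\|u^n\|_{G^s_\sigma}\le \widetilde C_s^{\,n-1}\|u\|_{G^s_\sigma}^n$ for every $n\ge1$.

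The second step is to sum the Taylor series in the Banach space $G^s_\sigma$. Setting $x:=\|u\|_{G^s_\sigma}$, the monomial bound gives
$$
\sum_{n\ge 1}|a_n|\,\|u^n\|_{G^s_\sigma}\le \sum_{n\ge 1}|a_n|\,\widetilde C_s^{\,n-1} x^n=:\varphi_s(x)\,.
$$
Since $\varphi$ is entire, $\limsup_{n\to\infty}|a_n|^{1/n}=0$, so the root test, applied to the scalar series $\sum|a_n|\widetilde C_s^{\,n-1} z^n$, shows that $\varphi_s$ has infinite radius of convergence. In particular the partial sums $S_N(u):=\sum_{n=1}^N a_n u^n$ form a Cauchy sequence in $G^s_\sigma$, hence converge to an element which must coincide with $\varphi(u)$, and
$$
\|\varphi(u)\|_{G^s_\sigma}\le\varphi_s\bigl(\|u\|_{G^s_\sigma}\bigr)\,.
$$
By construction $\varphi_s$ is entire, all of its coefficients are non-negative, and there is no constant term, so $\varphi_s(0)=0$ and $\varphi_s$ is positive and monotone increasing on $\R_+$, as required.

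The main ``obstacle'' is really only bookkeeping: once the algebra property of Lemma~\ref{LemA.uv} is in hand, the argument reduces to the standard observation that an entire function composed with a Banach-algebra element is represented by a convergent power series whose dominating scalar series inherits entireness from the original. The only minor subtlety is that convergence must be verified in the Gevrey norm rather than pointwise, which is precisely what the absolute summability of the scalar majorant $\varphi_s$ provides.
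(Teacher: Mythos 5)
Your proof is correct and follows essentially the same route as the paper's: expand $\varphi$ in its Taylor series, apply the algebra estimate of Lemma~\ref{LemA.uv} with $\kappa=s$ to bound $\|u^n\|_{G^s_\sigma}$, and observe that the dominating scalar series defines an entire majorant $\varphi_s$ because $\lim_{n\to\infty}|a_n|^{1/n}=0$. Your remark that the hypothesis should read $s>\tfrac12$ rather than $\sigma>\tfrac12$ is a correct reading of the (evidently typographical) statement, and your explicit Cauchy-sequence argument for convergence of the partial sums in $G^s_\sigma$ only makes precise what the paper leaves implicit.
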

\begin{proof} Since $\varphi$ is entire, it
  can be expressed in terms of the convergent Taylor expansion
$$
\varphi(z)=\sum_{n=1}^\infty a_nz^n,\  z\in\Bbb C
$$
($a_0=0$ since $\varphi(0)=0$). Thus, using \eqref{A.mult} with $\kappa=s$, we get
$$
\|\varphi(u)\|_{G^s_\sigma}\le\sum_{n=1}^\infty |a_n|\|u^n\|_{G^s_n}\le
\sum_{n=1}^\infty |a_n| (2C_{s,s})^n\|u\|_{G^s_n}^n:=\varphi_s(\|u\|_{G^s_\kappa})\,.
$$
Since $\varphi$ is entire, we have $\lim_{n\to\infty}\sqrt[n]{|a_n|}=0$ and therefore $\varphi_s(z)$ is also
an entire function. Obviously $\varphi_s(0)=0$ and $\varphi_s'(x)\ge0$ if $x\ge0$. This finishes
 the proof of the corollary.
\end{proof}

To conclude, we state one more useful estimate which compares Gevrey norms with different exponents.

\begin{lemma}\label{LemA.pd} Let $u\in G^s_\sigma$ for some $\sigma>0$, $s\ge0$. Then
$$
\|u\|_{G^{s+p}_{\sigma-\delta}}\le C(p,\delta)\|u\|_{G^s_\sigma},
$$
where $C(p,\delta)$ is independent of $s$ and $\sigma$.
\end{lemma}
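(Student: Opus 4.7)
My plan is quite direct and uses no auxiliary constructions, exploiting only the multiplicative structure of the Gevrey weights. Recalling that
\[
\|u\|_{G^s_\sigma} = \|e^{\sigma(|\xi|+1)}(1+|\xi|^{2s})^{1/2}\widehat u(\xi)\|_{L^2},
\]
the strategy is to rewrite the heavier weight appearing in $\|u\|_{G^{s+p}_{\sigma-\delta}}$ as the product of the lighter weight defining $\|u\|_{G^s_\sigma}$ and a uniformly bounded correction factor, then pull the latter out as an $L^\infty$-multiplier, leaving the $L^2$-norm on the right-hand side.

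First I would separate the polynomial factor via the elementary inequality $(1+|\xi|^{2(s+p)}) \le (1+|\xi|^{2s})(1+|\xi|^{2p})$, which is just $(1+a)(1+b)\ge 1+ab$ for $a,b\ge 0$. Taking square roots gives
\[
(1+|\xi|^{2(s+p)})^{1/2} \le (1+|\xi|^{2s})^{1/2}(1+|\xi|^{2p})^{1/2}.
\]
Second, I would split the exponential as $e^{(\sigma-\delta)(|\xi|+1)} = e^{-\delta(|\xi|+1)}\, e^{\sigma(|\xi|+1)}$, so that the full weight in $\|u\|_{G^{s+p}_{\sigma-\delta}}$ factors as the weight defining $\|u\|_{G^s_\sigma}$ multiplied by the correction $e^{-\delta(|\xi|+1)}(1+|\xi|^{2p})^{1/2}$.

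Third, and this is the crux, for each fixed $p,\delta>0$ the function $\xi\mapsto e^{-\delta(|\xi|+1)}(1+|\xi|^{2p})^{1/2}$ is continuous on $\R$ and tends to $0$ as $|\xi|\to\infty$, so it attains a finite supremum
\[
C(p,\delta):=\sup_{\xi\in\R} e^{-\delta(|\xi|+1)}(1+|\xi|^{2p})^{1/2}
\]
that visibly depends only on $p$ and $\delta$. Bounding the correction pointwise by $C(p,\delta)$ before taking the $L^2$-norm in $\xi$ then yields $\|u\|_{G^{s+p}_{\sigma-\delta}}\le C(p,\delta)\|u\|_{G^s_\sigma}$, as claimed.

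There is essentially no obstacle: the content of the lemma is simply the quantitative statement that exponential gain in the analyticity radius $\delta$ dominates any polynomial loss of $p$ derivatives, while the multiplicative decomposition of the polynomial weights automatically makes the resulting constant independent of $s$ and $\sigma$. If anything, the only point requiring a moment of care is verifying that the supremum in the definition of $C(p,\delta)$ is really $s$- and $\sigma$-independent, which is immediate from the factorization performed in the first two steps.
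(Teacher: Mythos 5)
Your proof is correct and follows essentially the same route as the paper's: factor the $G^{s+p}_{\sigma-\delta}$ weight into the $G^{s}_{\sigma}$ weight times the correction $e^{-\delta(1+|\xi|)}(1+|\xi|^{2p})^{1/2}$, and bound that correction by its supremum over $\xi\in\R$. Your explicit use of $(1+|\xi|^{2(s+p)})\le(1+|\xi|^{2s})(1+|\xi|^{2p})$ is a step the paper leaves implicit, so if anything your write-up is slightly more careful.
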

\begin{proof} Indeed, by definition
$$
\|u\|^2_{G^{s+p}_{\sigma-\delta}}=\int_\R e^{2(\sigma-\delta)(1+|\xi|)}
(1+|\xi|^{2(s+p)})|\hat u(\xi)|^2\,d\xi\le
\max_{\xi\in\R}\{e^{-2\delta(1+|\xi|)}(1+|\xi|^{2p})\}\|u\|^2_{G^s_\sigma}\,.
$$
Thus, the inequality holds with
$$
C(p,\delta):=\max_{\xi\in\R}\{e^{-\delta(1+|\xi|)}(1+|\xi|^{2p})^{1/2}\}\,.
$$
\end{proof}
}

%%%%%%%%%% Insert bibliography here %%%%%%%%%%%%%%

\vskip2pc

\end{document}